\documentclass[13pt]{amsart}
\usepackage{epstopdf}% To incorporate .eps illustrations using PDFLaTe\oL, etc.
\usepackage[numbers,sort&compress]{natbib}% Citation support using natbib.sty
\bibpunct[, ]{[}{]}{,}{n}{,}{,}% Citation support using natbib.sty

\makeatletter% @ becomes a letter
\def\NAT@def@citea{\def\@citea{\NAT@separator}}% Suppress spaces between citations using natbib.sty
\makeatother% @ becomes a symbol again
\usepackage{xcolor}%colors

\usepackage{amsmath}
\usepackage{amssymb}
\usepackage{amsthm}
\usepackage{amsfonts}
\usepackage{arydshln}
\usepackage{enumerate}
\usepackage[thinlines]{easybmat}
\numberwithin{equation}{section}
\newcommand{\hsp}{E}

\newcommand{\oL }{\mathcal{L}(\hsp) }

\newcommand{\h}{\mathcal{H}}

\newcommand{\ke}{{\mathbf{k}}}

\newcommand{\M}{{\mathbf{M}}}

\newcommand{\J}{{\mathbf{J}}}

\newcommand{\z}{{z}}
\def\ni{\noindent}
\def\nn{\nonumber}
\newtheorem{theorem}{Theorem}[section]
\newtheorem{proposition}[theorem]{Proposition}
\newtheorem{corollary}[theorem]{Corollary}
\newtheorem{lemma}[theorem]{Lemma}
\theoremstyle{definition}

\newtheorem{example}[theorem]{Example}

\numberwithin{equation}{section}

\title[Toeplitz Operators and Conjugations]{Block Toeplitz operators, Matrix valued truncated Toeplitz Operators, and Conjugations}

\author[R. Khan]{REWAYAT KHAN}
\address{ Department of Artificial Intelligence and Data Engineering, Istanbul Aydin University, Turkey}
\email{rewayatkhan@aydin.edu.tr}

\keywords{Conjugation, Block Toeplitz operators, Matrix valued truncated Toeplitz operators}
\subjclass[2010]{Primary 47B35, Secondary 47B32, 30D20}
\begin{document}
\begin{abstract}{ In this paper, we characterize the complex symmetry of block Toeplitz operators on the Hardy-Hilbert space $H^2(E)$ with conjugation $C_{\mu, \lambda}$, where $E$ is a finite dimensional Hilbert space. We also extend the application of this conjugation to matrix valued truncated Toeplitz operators. 
	}
\end{abstract}
\maketitle

\section{\bf{Introduction}}
 Let $\mathbb{C}$ be the complex plane and $\mathbb{T}$ the unit circle in $\mathbb{C}$. Let $L^{2}(\mathbb{T})$ be the space of all measurable functions on the unit circle $\mathbb{T}$ whose Fourier coefficients are square summable.
Let $H^2$ be the classical Hardy space on the unit disk $\mathbb{D}=\{z\in\mathbb{C}:|z|<1\}$. The space $H^{2}$ can be thought of as a closed subspace of $L^{2}(\mathbb{T})$, with respect to the normalized Lebesgue measure on $\mathbb{T}$, consisting of functions whose negative Fourier coefficients vanish. %es. 
 The space of essentially bounded functions in $L^2(\mathbb{T})$ is denoted by $L^{\infty}$, while the space of bounded analytic functions in $H^2$ is denoted by $H^{\infty}$.

\medskip

 For a complex separable Hilbert space $E$, let  $\mathcal{L}(E)$ denote the algebra of all bounded linear operators on $E$.  Let $L^{2}(E)$ represent the Hilbert space of $E$-valued measurable functions that are square integrable on $\mathbb{T}$, and let $H^{2}(E)$ denote the corresponding Hilbert Hardy space. For example, when $E=\mathbb{C}^{n}$, we have $L^{2}(E)=L^{2}(\mathbb{C}^{n})$ and $H^{2}(E)=H^{2}(\mathbb{C}^{n})$. Similarly, we can define the $\mathcal{L}(E)$-valued, i.e., operator-valued functions. The corresponding spaces are denoted by $L^{2}(\mathcal{L}(E))$ and $H^{2}(\mathcal{L}(E))$. The operator-valued essentially bounded functions on $\mathbb{T}$ are denoted by $L^{\infty}(\mathcal{L}(E))$, while the space of bounded analytic functions in $H^{2}(\mathcal{L}(E))$ is denoted by $H^{\infty}(\mathcal{L}(E))$. If $\operatorname{dim} E=n<\infty$, then $\mathcal{L}(E)$ corresponds to the space of $n\times n$ matrices. 

\medskip

%A function $B(z)\in \oL$ is called a \textit{finite
%Blaschke-Potapov product} if it admits a factorization of the form
%\begin{equation}\label{BP}
%B=UB_{1}B_{2}...B_{n},
%\end{equation}
%where $U$ is a unitary operator and, for each $j=1,\dots,n$,
%$$B_{j}(z)=\frac{\lambda_{j}-z}{1-%\overline{\lambda}_{j}z}P_{j}+(I-P_{j})$$
%for some $\lambda_{j}\in \mathbb{D}$ and %$P_{j}$ on $E$ is an orthogonal projection. %s , and a unitary operator $U$.
%The degree of the Blaschke-Potapov product in %\eqref{BP} is defined by
%$$\text{deg\,} B
%=\sum\limits_{j=1}^{n}\text{rank\,} P_{j}.$$
% Note that $\Theta$ in \eqref{BP} is an inner %function.

 A vector valued model space $K_{\Theta}\subset H^{2}(\hsp)$ is the orthogonal complement of $\Theta H^{2}(\hsp)$, that is, $K_{\Theta}= H^{2}(\hsp)\ominus  \Theta H^{2}(\hsp)$. Here $\Theta$ is an operator-valued inner function: a function with values in $\oL $ (the algebra of all bounded linear operators on $\hsp$), analytic in $\mathbb{D}$, bounded, and such that the boundary values $\Theta(\z)$ are unitary operators a.e. on $\mathbb{T}$. These spaces appear in connection with the model theory of Hilbert space contractions (see \cite{NF}). 
\medskip

The paper is organized as follows: after providing some introductory materials in Section 1 and 2, we characterize the complex symmetry of block Toeplitz operators with
respect to the given conjugation in Section \ref{s3}. In Section \ref{s4}, we extend the application of the given conjugation to matrix valued truncated Toeplitz operators.

\section{Preliminaries}\label{s2}
Let $E$ be a complex separable Hilbert space. A map $C:E\to E$ is called a conjugation if 
\begin{enumerate}
    \item $C$ is antilinear, that is, $C(\alpha f+ \beta g)=\bar \alpha Cf+ \bar \beta Cg$   for all $\alpha, \beta \in \mathbb{C}$ and $f,g\in E$,
    \item $C$ is an involution, that is, $C^2=I_{E}$, and 
    \item $C$ is isometric, that is $\|Cf\|=\|f\|$ for all $f\in E$. 
\end{enumerate}
The importance of
conjugations comes, for example, from their connection with complex
symmetric operators. Recall that a bounded linear operator $T\in \oL$ is said to be $C$-symmetric ($C$ being a conjugation on $\hsp$) if $CTC=T^*$. We say that $T$ is complex symmetric if it is $C$-symmetric with respect
to some conjugation $C$ (see, e.g., \cite{GP} for more details on conjugations
and complex symmetric operators).

We denote the norm and inner product in $E$ by $\|.\|_{E}$ and $\langle . , .\rangle_{E}$, respectively. The space $L^2(E)$ consists of functions $f:\mathbb{T}\to E$ of the form
\begin{equation}\label{one1}
 f(z)=\sum\limits_{n=-\infty}^{\infty}a_{n}z^n
 \text{ such that}  \sum\limits_{n=-\infty}^{\infty}\|a_{n}\|_{E}^{2}<\infty\end{equation}
  $\text{with}\{a_{n}\}\subset E$.
It is a separable Hilbert space with the inner product given by
\begin{equation*}
\langle f,g\rangle_{L^{2}(E)}=\int_{\mathbb{T}}\langle f(z),g(z)\rangle_{E}\,dm(z),\quad f,g\in L^2(E).
\end{equation*}
If $f\in L^2(E)$ is as in %given by
 \eqref{one1}, then its Fourier series converges in the $L^2(E)$-norm and
$$\|f\|_{L^2(E)}^2=\int_{\mathbb{T}} \| f(z)\|_{E}^2\,dm(z)=\sum\limits_{n=-\infty}^{\infty}\|a_{n}\|_{E}^{2}.$$

The vector-valued Hardy-Hilbert space %$H^{2}(E)$ 
is a subspace 
 of $L^2(E)$ defined as
$$H^{2}(E)={\{f\in L^{2}(E): a_{n}=0, n\leq -1}\}.$$
\smallskip

Throughout the paper, $E$ is a finite dimensional Hilbert space with $\text{dim}~E=n$. Let $\{e_j\}_{j=1}^{n}$ be a fixed orthonormal basis for $E$. Then the set $$\mathcal{B}=\{z^ke_{j}: k\geq 0~\text{and} ~j=1,2,...,n\}.$$ is orthonormal basis for $H^2(E)$.

%For each $w \in \mathbb{D}$ and $x\in E$, the function $$\ke_{w} x=\frac{1}{1-\bar w z}I_{E}\,x,$$ belong to $H^2(E)$ is a reproducing kernel function for $H^2(E)$ and has the following reproducing property
%$$\langle f, \ke_{w}x\rangle=\langle f(w), x\rangle.$$

%Similarly, 
The space $L^{2}(\oL$ is defined as %, i.e.,
$$L^{2}(\oL={\Big\{\Phi:\mathbb{T}\to \oL: \Phi(z)=\sum\limits_{n=-\infty}^{\infty}\Phi_{n}z^n, 
\sum\limits_{n=-\infty}^{\infty}\|\Phi_{n}\|_{\oL}^{2}<\infty }\Big\},$$
while %We can define
 the space $H^{2}(\oL$ is the set of $\Phi\in L^{2}(\oL$ represented as above, with $\Phi_{n}=0$ for all $n\leq -1$. 
%$$H^{2}(\LL(E))={\{\Phi\in L^{2}(\LL(E)): A_{n}=0, n\leq -1}\}.$$

The space of essentially bounded functions in $L^{2}(\oL$ is denoted by $L^{\infty}(\oL$, while the space of bounded functions in $H^{2}(\oL$ is denoted by $H^{\infty}(\oL$.

Note that for each $w\in \mathbb{D}$, the function $\ke_w(z)= (1-\bar wz)^{-1}I_{\hsp}$ belongs to $H^{\infty}(\oL )$. Moreover, for every $x\in\hsp$, the function $\ke_wx:z\mapsto \ke_w(z)x$ belongs to $H^{\infty}(\hsp)$ and has the following reproducing property:
$$\langle f, \ke_w x\rangle_{L^2(\hsp)}=\langle f(w),x\rangle_{\hsp}, \quad f\in H^2(\hsp).$$
For each $w\in\mathbb{D}$ we can consider
$$\ke_w^{\Theta}(z)=\tfrac1{1-\bar w z}(I_\hsp-\Theta(z)\Theta(w)^*)\in H^{\infty}(\oL).$$
For $x\in\hsp$ we will denote the function $z\mapsto \ke_w^{\Theta}(z)x$ simply by $\ke_w^{\Theta}x$. Then, for each $x\in\hsp$ and $w\in\mathbb{D}$, the function $\ke_w^{\Theta}x=P_{\Theta}(\ke_wx)$  belongs to $K_{\Theta}^{\infty}=K_{\Theta}\cap H^{\infty}(\hsp )$ and has the following reproducing property:
$$\langle f, \ke_w^{\Theta}x\rangle_{L^2(\hsp)}=\langle f(w), x\rangle_\hsp\quad\text{for every }f\in K_{\Theta}.$$
It follows in particular that $K_{\Theta}^{\infty}$ is a dense subset of $K_{\Theta}$.

%Moreover, we also have 
%$$\widetilde{k_{w}^{\Theta}}(z)x=\frac{\Theta(z)-\Theta(w)}{z-w}x.$$

Let $\Phi\in L^{\infty}(\mathcal{L}(E))$. The multiplication operator $M_\Phi: L^2(E)\to L^2(E)$ is defined by
$$M_{\Phi}f=\Phi f, \quad ~~f\in L^{2}(E).$$
The  block Toeplitz operator $T_{\Phi}$ is defined by
$$T_{\Phi}f=P_{+}(M_\Phi f),\quad f\in H^{2}(E),$$
where $P_{+}$ denotes the orthogonal projection from $L^{2}(E)$ onto $H^{2}(E)$. The operator $S=T_{zI_{E}}$ is an example of a block Toeplitz operator.

Let $P_{\Theta}$ be the orthogonal projection from $L^2(E)$ onto $K_{\Theta}$.
For an operator valued inner functions $\Theta\in H^{\infty}(\oL)$ and $\Phi\in L^2(\oL )$, let
\begin{equation}\label{11}
A_{\Phi}^{\Theta} f=P_{\Theta} (\Phi f),\quad f\in K_{\Theta}\cap H^{\infty}(E).
\end{equation}
 The operator $A_{\Phi}^{\Theta}$ is called a matrix valued truncated Toeplitz operator (see \cite{KT}). It is densely defined on $K_{\Theta}$. Let $\mathcal{MT}(\Theta)$ be the set of all MTTOs of the form \eqref{11} that can be extended boundedly to the whole space $K_{\Theta}$.

\section{Complex symmetry of block Toeplitz operators}\label{s3}
A bounded linear operator $T$ on a separable Hilbert space $E$ is complex symmetric if there exists an orthonormal basis for $E$ with respect to which $T$ has a self-transpose matrix representation. Complex symmetric operators on Hilbert spaces are the generalization of complex symmetric matrices. The study of complex symmetric operators was initiated by Garcia, Putinar, and Wogen (\cite{GP, GPII, GW}). The class of complex symmetric operators includes all normal operators, Hankel operators, truncated Toeplitz operators, Volterra integration operators, and many others. We refer the reader to \cite{GP, GPII, GW} for more details.

%An equivalent definition also exist and involve conjugation.
%A \emph{conjugation} on a Hilbert space $E$ is a conjugate-linear, isometric and involutive map. We say that $T$ is $C$-symmetric if $T=CT^{*}C$,
%and complex symmetric if there exists a conjugation $C$ with respect to which $T$ is $C$-symmetric (see~\cite{GP}).

Let $\mu, \lambda\in \mathbb{T}$. The antilinear operator  $C:H^{2}\to H^{2}$ defined by
\begin{eqnarray}\label{CON}
(Cf)(z)=\mu \overline{f(\lambda \bar z)},
\end{eqnarray}
is a conjugation on $H^{2}$ \cite[Lemma 2.3]{KL}.

\medskip

The following theorem provides a useful criterion to determine whether a standard Toeplitz operator is complex symmetric  with respect to the conjugation defined in \eqref{CON} (see \cite[Theorem 2.2]{KKL}).

\begin{theorem}\cite[Theorem 2.2]{KKL}\label{STO}
Let $\varphi\in L^{\infty}$. Then $T_{\varphi}$ is complex symmetric with respect to the conjugation $C$ if and only if $$\varphi(z)=\varphi(\lambda \bar z).$$
\end{theorem}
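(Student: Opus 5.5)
The plan is to test the identity $CT_{\varphi}C=T_{\varphi}^{*}=T_{\bar\varphi}$ on the orthonormal basis $\{z^k\}_{k\ge 0}$ of $H^2$ and compare Fourier coefficients. Since both sides are bounded and conjugate-linear maps composed twice (hence linear), it suffices to check $\langle CT_\varphi C z^k, z^j\rangle=\langle T_{\bar\varphi}z^k,z^j\rangle$ for all $j,k\ge 0$.

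First compute the action of $C$ on monomials. From \eqref{CON},
\[
C z^k=\mu\,\overline{(\lambda\bar z)^k}=\mu\bar\lambda^k z^k \quad (z\in\mathbb{T}).
\]
For any antilinear isometric involution we have $\langle Cf,Cg\rangle=\langle g,f\rangle$. Writing $\hat\varphi(n)$ for the Fourier coefficients, this gives
\[
\langle CT_\varphi C z^k,z^j\rangle=\overline{\langle T_\varphi Cz^k, Cz^j\rangle}
=\overline{\mu\bar\lambda^k\,\overline{\mu\bar\lambda^j}\,\langle \varphi z^k,z^j\rangle}
=\bar\lambda^{j-k}\,\overline{\hat\varphi(j-k)} ,
\]
using $|\mu|=|\lambda|=1$. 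On the other side,
\[
\langle T_{\bar\varphi}z^k,z^j\rangle=\widehat{\bar\varphi}(j-k)=\overline{\hat\varphi(k-j)}.
\]
So $C$-symmetry holds if and only if $\hat\varphi(k-j)=\lambda^{j-k}\hat\varphi(j-k)$ for all $j,k\ge0$. Since $j-k$ ranges over all of $\mathbb{Z}$, this is equivalent to
\[
\hat\varphi(-n)=\lambda^{n}\hat\varphi(n)\quad\text{for all } n\in\mathbb{Z}.
\]

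Finally, translate this into the functional identity. For $\psi(z)=\varphi(\lambda\bar z)=\sum_n\hat\varphi(n)\lambda^n z^{-n}$, we get $\hat\psi(-n)=\lambda^n\hat\varphi(n)$. Hence $\varphi=\psi$ a.e. if and only if $\hat\varphi(-n)=\lambda^n\hat\varphi(n)$ for all $n$, which is exactly the condition obtained above. Uniqueness of Fourier coefficients in $L^2\supset L^\infty$ then finishes both directions.

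The only delicate step is keeping the conjugations and powers of $\lambda$ straight in the antilinear inner product identity, since an error there flips the condition to $\varphi(\bar\lambda\bar z)$. Everything else is routine.
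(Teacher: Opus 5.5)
Your proof is correct. The two matrix-entry computations check out, $\langle CT_\varphi Cz^k,z^j\rangle=\bar\lambda^{j-k}\overline{\hat\varphi(j-k)}$ and $\langle T_{\bar\varphi}z^k,z^j\rangle=\overline{\hat\varphi(k-j)}$, and the resulting condition $\hat\varphi(-n)=\lambda^n\hat\varphi(n)$ for $n\in\mathbb{Z}$ is exactly the Fourier-coefficient form of $\varphi(z)=\varphi(\lambda\bar z)$.

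The paper does not prove this statement. It quotes it from \cite{KKL} and proves only matrix-valued generalizations, by two routes.

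\begin{itemize}
\item Your argument is essentially the scalar case of the proof of Theorem \ref{Gen1}, (i)$\Leftrightarrow$(ii). There $C_{\mu,\lambda}T_\Phi$ and $T_\Phi^*C_{\mu,\lambda}$ are compared on the monomial basis, giving $\Phi_{-n}=\lambda^n\Gamma\Phi_n^*\Gamma$. With $E=\mathbb{C}$ and $\Gamma$ complex conjugation, $\Gamma\Phi_n^*\Gamma=\hat\varphi(n)$, which is precisely your condition.
\item The proof of Theorem \ref{CSTO} is coordinate-free. It uses $CP_+=P_+C$ to compute directly that $CT_\varphi C=T_{\overline{\varphi(\lambda\bar z)}}$. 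It then finishes with $T_\varphi^*=T_{\bar\varphi}$ and injectivity of the symbol map $\psi\mapsto T_\psi$.
\end{itemize}

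The operator-identity route is shorter and identifies the symbol of $CT_\varphi C$ for every $\varphi$, not just the symmetric ones. It also extends verbatim to an arbitrary conjugation $\Gamma$ on $E$. However, it relies on injectivity of the symbol map, which is itself proved by the same identity $\langle T_\psi z^k,z^j\rangle=\hat\psi(j-k)$ that you use. Your version is self-contained; the price is the extra bookkeeping step converting the coefficient relation back into the functional identity via the Fourier series of $\varphi(\lambda\bar z)$.
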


In this section, we do not fix any specific conjugation on $E$, but rather consider a more general conjugation on $E$, which leads to complex symmetry of block Toeplitz operators.

\medskip

\begin{lemma}
Let $\mu, \lambda\in \mathbb{T}$ and $\Gamma$ be a conjugation on $E$. Then the operator $C_{\mu, \lambda}:L^2(E)\to L^2(E)$ defined by 
\begin{equation}\label{Gam}
(C_{\mu, \lambda}f)(z)=\mu \Gamma (f(\lambda \bar z)), \quad f\in L^2(E)
\end{equation}
is a conjugation. 
\end{lemma}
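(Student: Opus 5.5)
We verify the three defining properties of a conjugation directly from \eqref{Gam}, using only that $\Gamma$ is a conjugation on $E$, that $|\mu|=|\lambda|=1$, and that the map $z\mapsto \lambda\bar z$ is a measure-preserving involution of $\mathbb{T}$. We also check along the way that $C_{\mu,\lambda}f$ is again an element of $L^2(E)$. Measurability is clear, since $\Gamma$ is continuous on the finite dimensional space $E$. Square integrability will follow from the isometry computation below.

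First, antilinearity. For $\alpha,\beta\in\mathbb{C}$ and $f,g\in L^2(E)$ we have
$$\big(C_{\mu,\lambda}(\alpha f+\beta g)\big)(z)=\mu\Gamma\big(\alpha f(\lambda\bar z)+\beta g(\lambda\bar z)\big)=\bar\alpha\,\mu\Gamma(f(\lambda\bar z))+\bar\beta\,\mu\Gamma(g(\lambda\bar z)),$$
by the antilinearity of $\Gamma$.

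Second, the involution property. Put $h=C_{\mu,\lambda}f$, so that $h(w)=\mu\Gamma(f(\lambda\bar w))$. Then
$$(C_{\mu,\lambda}h)(z)=\mu\Gamma\big(h(\lambda\bar z)\big)=\mu\Gamma\big(\mu\Gamma(f(\lambda\overline{\lambda\bar z}))\big)=\mu\bar\mu\,\Gamma^2\big(f(\lambda\bar\lambda z)\big)=f(z).$$
Here we used $\lambda\overline{\lambda\bar z}=|\lambda|^2z=z$, $|\mu|^2=1$, and $\Gamma^2=I_E$.

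Third, the isometry property. Since $\Gamma$ is isometric on $E$ and $|\mu|=1$,
$$\|C_{\mu,\lambda}f\|^2=\int_{\mathbb{T}}\|f(\lambda\bar z)\|_E^2\,dm(z)=\int_{\mathbb{T}}\|f(z)\|_E^2\,dm(z)=\|f\|^2.$$
The middle equality is the only point that needs care, and it is the step we expect to be the main (mild) obstacle. It requires that the normalized Lebesgue measure $m$ is invariant under $z\mapsto\lambda\bar z$. This holds because the map is the composition of the reflection $z\mapsto\bar z$ with the rotation by $\lambda$, and both preserve arc length.

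Alternatively, one can avoid measure theory altogether by working with Fourier series. If $f=\sum a_nz^n$, then, using $\bar z=z^{-1}$ on $\mathbb{T}$,
$$C_{\mu,\lambda}f=\sum_n \mu\bar\lambda^n\,\Gamma(a_n)\,z^{-n}.$$
Each coefficient satisfies $\|\mu\bar\lambda^n\Gamma(a_n)\|_E=\|a_n\|_E$, so the norm identity follows from Parseval's identity in $L^2(E)$.

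Having established antilinearity, $C_{\mu,\lambda}^2=I$, and the isometry property, we conclude that $C_{\mu,\lambda}$ is a conjugation on $L^2(E)$.
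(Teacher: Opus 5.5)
Your proof is correct and follows the paper's argument: antilinearity comes from that of $\Gamma$, $C_{\mu,\lambda}^2=I$ follows from $\lambda\overline{\lambda\bar z}=z$, $|\mu|=1$ and $\Gamma^2=I_E$, and the isometry uses invariance of $m$ under $z\mapsto\lambda\bar z$; the paper writes the isometry as $\langle C_{\mu,\lambda}f,C_{\mu,\lambda}g\rangle=\langle g,f\rangle$, which is equivalent to your norm version by polarization. There is one small slip in your optional Fourier-series remark: $\Gamma$ is antilinear, so it conjugates the whole scalar $\lambda^n\bar z^n$, giving $C_{\mu,\lambda}f=\mu\sum_n\bar\lambda^n\Gamma(a_n)z^n$ rather than a sum over $z^{-n}$; the Parseval norm identity is unaffected.
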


\begin{proof} Let $\Gamma$ be a conjugation on $E$. Indeed, the operator $C_{\mu,\lambda}$ is antilinear as $\Gamma$ is a conjugation.
The operator $C_{\mu, \lambda}$ is an involution because 
$$C_{\mu, \lambda}^2 f(z)=C_{\mu, \lambda}(\mu \Gamma(f(\lambda \bar z)))=\bar \mu \mu \Gamma^2(f(\lambda \bar \lambda z))=f(z).$$
 Next, we show that $$\langle C_{\mu,\lambda}f, C_{\mu,\lambda}g\rangle=\langle g, f\rangle, ~~~\text{for all} ~~f,g\in L^{2}(E).$$ Consider
\begin{align*}
\langle C_{\mu,\lambda}f, C_{\mu,\lambda}g\rangle
&=\langle\mu \Gamma(f(\lambda\bar z)) , \mu \Gamma(g(\lambda\bar z))\rangle
=\langle \Gamma(f(\lambda\bar z)) ,  \Gamma(g(\lambda\bar z))\rangle\\
&=\langle  g(\lambda\bar z), f(\lambda\bar z) \rangle
=\langle  g(z), f(z) \rangle\\
&=\langle  g, f \rangle.
\end{align*}
Hence, $C_{\mu,\lambda}$ is a conjugation %on $L^{2}(E)$.
\end{proof}

Let $\{e_j\}_{j=1}^{n}\subset \hsp$ be an orthonormal basis for $\hsp$ and $\Gamma$ be a conjugation on $E$. Then, by \cite[Proposition 2]{SJ}, there exists a unitary operator $U\in \oL$ and a conjugation $J$ on $\hsp$ defined by 
\begin{equation}\label{j}
    J\Big(\sum\limits_{j=1}^{n}\alpha_j e_j\Big)=\sum\limits_{j=1}^{n}\overline{\alpha}_j e_j
\end{equation}
such that 
\begin{equation}\label{UJ}
\Gamma=UJ.
\end{equation}

Let $\h=H^2(E)$ and $J$ be a conjugation \eqref{j} on $E$. 
 The conjugation
  $\J^*:H^2(E)\to H^2(E)$ is defined by
\begin{equation}\label{J}
(\J^*f)(z)=J(f(\overline{z}))\quad \text{a.e. on}~~\mathbb{T}.
\end{equation}
It is not difficult to verify that for $f(z)=\sum_{n=0}^{\infty}a_nz^n\in H^{2}(E)$ we have
\begin{equation}
(\J^*f)(z)=\sum_{n=0}^{\infty}J(a_n)z^n.
\end{equation}

Hence, $\J^*$ is an $M_z$-commuting conjugation, i.e., $\J^*M_z=M_z\J^*$, $\J^*(H^{2}(\hsp))=H^{2}(\hsp)$, and $\J^*P_+=P_+\J^*$.
\begin{proposition}\label{adj}
Let $\mu, \lambda\in \mathbb{T}$, and $U\in \mathcal{L}(E)$ be a unitary operator. Then the operator $V_{\mu, \lambda}:H^2(E)\to H^2(E)$ defined by 
$$(V_{\mu, \lambda}f)(z)=\mu (U f)(\overline{\lambda} z)$$ is unitary.  
\end{proposition}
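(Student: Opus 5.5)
We would prove that $V_{\mu,\lambda}$ is unitary by showing it is a well-defined isometry from $H^2(E)$ into $H^2(E)$ and then writing down an explicit two-sided inverse. The cleanest route works with Taylor coefficients. For $f(z)=\sum_{n\ge 0}a_nz^n\in H^2(E)$ with $a_n\in E$ and $\sum\|a_n\|_E^2<\infty$, linearity and boundedness of $U$ give
\begin{equation*}
(V_{\mu,\lambda}f)(z)=\mu\sum_{n\ge 0}\overline{\lambda}^{\,n}(Ua_n)z^n .
\end{equation*}
This representation shows at once that $V_{\mu,\lambda}$ is linear. Since $|\mu|=|\lambda|=1$ and $U$ is unitary, we have $\|\mu\overline{\lambda}^{\,n}Ua_n\|_E=\|a_n\|_E$. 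Hence $V_{\mu,\lambda}f$ has no negative Fourier coefficients, lies in $H^2(E)$, and satisfies $\|V_{\mu,\lambda}f\|=\|f\|$. Alternatively, one can argue on $\mathbb{T}$: the change of variables $z\mapsto\overline{\lambda}z$ preserves normalized Lebesgue measure, so $\int_{\mathbb{T}}\|Uf(\overline{\lambda}z)\|_E^2\,dm=\int_{\mathbb{T}}\|f\|_E^2\,dm$.

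For surjectivity, we define $W:H^2(E)\to H^2(E)$ by
\begin{equation*}
(Wg)(z)=\overline{\mu}\,(U^*g)(\lambda z).
\end{equation*}
The same coefficient argument, now with $U^*$ unitary, shows that $W$ maps $H^2(E)$ into itself. Direct computation then gives
\begin{equation*}
(V_{\mu,\lambda}Wg)(z)=\mu\,U\big(\overline{\mu}\,U^*g(\lambda\overline{\lambda}z)\big)=g(z),
\end{equation*}
and in the same way $WV_{\mu,\lambda}=I$. So $V_{\mu,\lambda}$ is a bijective isometry, hence unitary, with $V_{\mu,\lambda}^{*}=W$. As a further check, one can verify $\langle V_{\mu,\lambda}f,g\rangle=\langle f,Wg\rangle$ by comparing coefficients.

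There is no substantial obstacle here. The only point needing care is to read $(Uf)(\overline{\lambda}z)$ as $U(f(\overline{\lambda}z))$, i.e. $U$ acting pointwise. With that reading, one must confirm that composing with the rotation $z\mapsto\overline{\lambda}z$ keeps functions analytic and preserves the $H^2$ norm; the coefficient computation above settles both.
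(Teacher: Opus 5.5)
Your proof is correct and is essentially the paper's argument written out in full. The paper simply observes that $V_{\mu,\lambda}$ is the composition of three unitaries on $H^2(E)$: multiplication by the unimodular constant $\mu$, the pointwise action of $U$, and the rotation $f\mapsto f(\overline{\lambda}\,\cdot)$. You instead verify the isometry and a two-sided inverse for the composite directly via Taylor coefficients, and your adjoint $(Wg)(z)=\overline{\mu}\,U^{*}g(\lambda z)$ correctly involves $U^{*}$ rather than $U$, which is worth keeping in mind against the paper's later identity $V_{\mu,\lambda}^{*}=V_{\overline{\mu},\overline{\lambda}}$.
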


\begin{proof}
The operator $V_{\mu, \lambda}$, being a composition of three unitary operators, is unitary. 
\end{proof}

\begin{lemma}
The conjugation $C_{\mu, \lambda}$ can be factorized as  $C_{\mu, \lambda}=V_{\mu, \lambda}\J^*=\J^*V_{\mu, \lambda}^*$, where $\J^*$ is given by \eqref{J}.
\end{lemma}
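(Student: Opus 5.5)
The plan is to verify the first factorization $C_{\mu,\lambda}=V_{\mu,\lambda}\J^*$ by evaluating both sides pointwise a.e. on $\mathbb{T}$. The second factorization then follows formally from the fact that $C_{\mu,\lambda}$ and $\J^*$ are involutions and $V_{\mu,\lambda}$ is unitary (Proposition \ref{adj}). Throughout, $U$ is the unitary from \eqref{UJ}, so that $\Gamma=UJ$, and $V_{\mu,\lambda}$ is built from this $U$.

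\emph{Step 1.} Take $f\in H^2(E)$ and compute directly from the definitions:
\begin{align*}
(V_{\mu,\lambda}\J^*f)(z)&=\mu\, U\big((\J^*f)(\overline{\lambda}z)\big)
=\mu\, U J\big(f(\overline{\overline{\lambda}z})\big)\\
&=\mu\, UJ\big(f(\lambda\bar z)\big)
=\mu\,\Gamma\big(f(\lambda\bar z)\big)=(C_{\mu,\lambda}f)(z).
\end{align*}
The only inputs are \eqref{J}, the definition of $V_{\mu,\lambda}$, and \eqref{UJ}. A side remark that comes out of this identity: $C_{\mu,\lambda}$ maps $H^2(E)$ into itself. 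This holds because $\J^*$ preserves $H^2(E)$ and $V_{\mu,\lambda}$ maps $H^2(E)$ onto $H^2(E)$, so the restriction of $C_{\mu,\lambda}$ to $H^2(E)$ makes sense as an operator on $H^2(E)$.

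\emph{Step 2.} By the Lemma, $C_{\mu,\lambda}$ is an involution, and so is $\J^*$. Inverting the identity of Step 1 gives
$$C_{\mu,\lambda}=C_{\mu,\lambda}^{-1}=(V_{\mu,\lambda}\J^*)^{-1}=(\J^*)^{-1}V_{\mu,\lambda}^{-1}=\J^*V_{\mu,\lambda}^*,$$
where the last equality uses the unitarity of $V_{\mu,\lambda}$. Inverses of the antilinear bijection $\J^*$ compose in the usual way, so the antilinearity causes no problem.

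As a sanity check, one can also compute $\J^*V_{\mu,\lambda}^*$ directly. Here $(V_{\mu,\lambda}^*f)(z)=\overline{\mu}\,U^*f(\lambda z)$, so the direct computation produces $\mu JU^*(f(\lambda\bar z))$; the factor $\mu$ appears because $J$ conjugates the scalar $\overline{\mu}$. Matching this with $\mu\Gamma(f(\lambda\bar z))$ requires $JU^*=UJ$. This follows from $\Gamma^2=I$: the relation $UJUJ=I$ gives $JUJ=U^*$, and multiplying on the left by $J$ yields $UJ=JU^*$.

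I expect no real obstacle. The only delicate point is bookkeeping in Step 1: in $\J^*$, the argument of $f$ is conjugated inside the composition, and the scalar $\mu$ passes through the antilinear $J$. Once Step 1 is checked carefully, Step 2 is purely formal.
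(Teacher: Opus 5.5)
Your proof is correct. Step 1 is the paper's computation, done pointwise on $\mathbb{T}$ instead of on the Taylor coefficients $\sum J(a_n)z^n$; the two are equivalent.

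The real difference is in the second identity. The paper only says ``similarly'' and appends $\J^*V_{\mu,\lambda}^*=\J^*V_{\overline{\mu},\overline{\lambda}}$. You instead derive $C_{\mu,\lambda}=\J^*V_{\mu,\lambda}^*$ by inverting $C_{\mu,\lambda}=V_{\mu,\lambda}\J^*$. This uses three facts: $C_{\mu,\lambda}$ restricted to $H^2(E)$ is an involution (your side remark justifies the restriction), $\J^*$ is an involution, and $V_{\mu,\lambda}$ is unitary. What this buys you is that you never need an explicit formula for $V_{\mu,\lambda}^*$.

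Your sanity check shows why that matters. The adjoint $(V_{\mu,\lambda}^*f)(z)=\overline{\mu}\,U^*f(\lambda z)$ involves $U^*$, not $U$. So a direct verification needs the identity $UJ=JU^*$, which comes from $\Gamma^2=I$. It also means the paper's identification $V_{\mu,\lambda}^*=V_{\overline{\mu},\overline{\lambda}}$ holds only when $U=U^*$. For example, $\Gamma=iJ$ with $U=iI$ gives $\J^*V_{\overline{\mu},\overline{\lambda}}=-C_{\mu,\lambda}$. That extra equality is not part of the statement, so your argument is unaffected.
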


\begin{proof}

Let $\Gamma$ be a conjugation on $E$. Then, by \eqref{UJ}, we have 
$$\Gamma=UJ.$$ Moreover, we have
\begin{align*}
    (V_{\mu, \lambda}\J^{*}f)(z)
    &=V_{\mu, \lambda} \left(\sum\limits_{n=0}^{\infty}J(a_n)z^n\right)\\
    &=\mu \sum\limits_{n=0}^{\infty}UJ(a_n)\overline{\lambda}^nz^n\\
    &=\mu \sum\limits_{n=0}^{\infty}\Gamma(a_n)\overline{\lambda}^nz^n\\
    &=\mu \Gamma (f(\lambda \bar z))\\
    &=(C_{\mu, \lambda}f)(z).
\end{align*}
Hence, we have $C_{\mu, \lambda}=V_{\mu, \lambda}\J^*$. Similarly, we have $C_{\mu, \lambda}=\J^*V_{\mu, \lambda}^*=\J^*V_{\overline{\mu}, \overline{\lambda}}$. 
\end{proof}

The action of $C_{\mu, \lambda}$ on the basis vectors is given by 
\begin{equation}\label{b1}
    C_{\mu, \lambda}(z^k e_{j})=\mu \Gamma (\lambda^k \overline{z}^ke_j)=\mu \overline{\lambda}^k z^k \Gamma e_j=\mu \overline{\lambda}^k z^k UJ e_j.
\end{equation}
Next we compute 
\begin{equation}\label{b2}
    V_{\mu, \lambda}(z^k e_j)=\mu U(\overline{\lambda}^k z^k e_j)=\mu \overline{\lambda}^k z^k Ue_j,
\end{equation}
 for all $j=1,2,3,\dots,n$ and $k\geq 0$.
It follows that 
$$C_{\mu, \lambda}(z^k e_{j})= V_{\mu, \lambda}(z^k e_j)=\mu \overline{\lambda}^k z^k UJ e_j.$$
If $Je_j=e_j$ or equivalently 
$$\Gamma e_j=Ue_j=f_j.$$ 

Since $V_{\mu, \lambda}$ is unitary, the set 
$$\mathcal{B}^{'}=\{\mu \overline{\lambda}^k z^k Ue_j:j=1,2,3,...,n, k\geq 0\}.$$
is an orthonormal basis for $H^2(E)$.

Let $M_{z}$ be the shift corresponding to the basis 
$\mathcal{B}$. Then we have 
$$S=C_{\mu, \lambda}M_z C_{\mu, \lambda},$$
which is a shift corresponding to the basis $\mathcal{B}^{'}$.

Also we have 
$$\J^*\M_z=\M_z\J^*~\text{and}~C_{\mu,\lambda}=V_{\mu, \lambda}\J^*=\J^* V^{*}_{\mu, \lambda}.$$ Therefore, we have 
$$S=C_{\mu, \lambda}M_z C_{\mu, \lambda}=V_{\mu, \lambda}\J^* M_z \J^* V^{*}_{\mu, \lambda}=V_{\mu, \lambda} M_z V^{*}_{\mu, \lambda}.$$

%Hence,
%\begin{align*}
%Sf_{k,j}
%&=S(\mu \overline{\lambda}^k z^k Ue_j)\\
%&=V_{\mu, \lambda} M_z V^{*}_{\mu, \lambda}(\mu \overline{\lambda}^k z^k Ue_j)\\
%&=V_{\mu, \lambda} M_z (z^k e_j)\\
%&=V_{\mu, \lambda} (z^{k+1} e_j)\\
%&=\mu U ( (\overline{\lambda}z)^{k+1} e_j)\\
%&=\mu \overline{\lambda}^{k+1}z^{k+1} %Ue_j\\
%&=f_{k+1,j}
%\end{align*}

 Let $\Gamma$ be an antilinear operator on $E$. Then $\Gamma$ is a conjugation on $E$ if and only if there exists an orthonormal basis $\{f_j\}_{j=1}^{n}$ for $E$ such that $$\Gamma \Big(\sum_{i=1}^{n}\alpha_i g_i\Big)=\sum_{i=1}^{n}\sum_{j=1}^{n}\bar{\alpha}_{i}c_{i,j}^{(g)}g_{j},$$ 
for every orthonormal bases $g=\{g_j\}_{j=1}^{n}$ of $E$, $\alpha_i\in \mathbb{C}$, where 
$$c_{i,j}^{(g)}=\sum_{k=1}^{n}\langle f_k, g_i \rangle\langle f_{k}, g_j \rangle.$$

 Note that, for any conjugation $\Gamma$ on $E$, there exists an orthonormal basis $\{e_{j}\}_{j=1}^{n}$ such that $\Gamma e_j=e_j$. Throughout this paper, we consider the conjugation $\Gamma$ on $E$ satisfying $\Gamma e_j=e_j$. 
\begin{example}
Let $\{e_j\}_{j=1}^{n}$ be an orthonormal basis for $E$ such that $$\Gamma e_j=e_j.$$ Consider the basis $$f_{k,j}=\mu^{\frac{1}{2}}\overline{\lambda}^{\frac{k}{2}}z^k e_j.$$ Let $C_{\mu, \lambda}$ be the conjugation on $H^2(E)$. Then $$C_{\mu,\lambda}f_{k,j}=\mu^{\frac{1}{2}}\overline{\lambda}^{\frac{k}{2}}z^k \Gamma e_j=\mu^{\frac{1}{2}}\overline{\lambda}^{\frac{k}{2}}z^k e_j=f_{k,j}.$$
By \cite[Section 3]{SJ}, we can compute 
$c^{(\mathcal{B})}_{(k,j), (m,l)}$ in the basis 
$$\mathcal{B}=\{z^ke_{j}: k\geq 0~\text{and} ~j=1,2,...,n\}$$
$$c^{(\mathcal{B})}_{(k,j), (m,l)}=\sum_{r,p}\langle f_{r,p}, z^k e_j \rangle \langle f_{r,p}, z^m e_l \rangle.$$
We obtain 
$$\langle f_{r,p}, z^k e_j\rangle =\mu^{\frac{1}{2}}\overline{\lambda}^{\frac{r}{2}}\delta_{r,k}\delta_{p,j}.$$
Therefore, the only nonzero term is 
$$c^{(\mathcal{B})}_{(k,j), (m,l)}=
    \begin{cases}
         \mu \overline{\lambda}^{k}, &(k, j)=(m,l),\\
         0, &\quad \text{otherwise}.
    \end{cases}
   $$
   Consequently, for 
   $$f(z)=\sum_{k= 0}^{\infty}\sum_{j=1}^{n}a_{k,j}z^ke_j,$$  and $a_{k}\in E$ we have 
   
   $$\Gamma(a_{k})=\sum_{j=1}^{n}\bar a_{k,j}e_j.$$ Therefore, we get the conjugation
   \begin{align*}
   C_{\mu, \lambda}f(z)
&=\sum_{k,j}\sum_{m,l}\overline{a}_{k,j}c^{(B)}_{(k,j), (m,l)}z^m e_l\\
&=\sum_{k= 0}^{\infty}\sum_{j=1}^{n}\overline{a}_{k,j}\mu \overline{\lambda}^k z^k e_{j}\\
&=\mu\sum_{k=0}^{\infty}\Gamma (a_{k})\overline{\lambda}^k z^k\\
&=\mu\Gamma (f(\lambda \bar z)). 
   \end{align*}
\end{example}

The following result shows that the conjugation $C_{\mu,\lambda}$ commutes with the orthogonal projection $P_{+}$.

\begin{proposition}
Let $P_{+}$ denote the orthogonal projection from $L^{2}(E)$ onto $H^{2}(E)$. Then the conjugation $C_{\mu,\lambda}$ and $P_{+}$ commute.
\end{proposition}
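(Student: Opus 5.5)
The plan is to verify the identity $C_{\mu,\lambda}P_{+}=P_{+}C_{\mu,\lambda}$ directly on $L^{2}(E)$. First we check it on monomials $z^{k}x$ with $k\in\mathbb{Z}$ and $x\in E$. Then we extend to all of $L^{2}(E)$ using antilinearity and continuity of both sides. Both $C_{\mu,\lambda}$ and $P_{+}$ are bounded (real-linear) maps on $L^{2}(E)$, and finite sums $\sum_{k}a_kz^k$ with $a_k\in E$ are dense. So it suffices to establish the identity on such finite sums, and by additivity on single terms $z^{k}a$.

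The key computation comes from the definition \eqref{Gam}. For $k\in\mathbb{Z}$ and $a\in E$ we have
$$C_{\mu,\lambda}(z^{k}a)=\mu\,\Gamma\big((\lambda\bar z)^{k}a\big)=\mu\,\overline{\lambda^{k}\bar z^{k}}\,\Gamma a=\mu\,\bar\lambda^{k}z^{k}\,\Gamma a,$$
using antilinearity of $\Gamma$ and $|z|=1$ on $\mathbb{T}$, so that $\overline{\bar z^{k}}=z^{k}$. This is exactly the computation \eqref{b1}, now allowed for negative $k$ as well. Hence $C_{\mu,\lambda}$ maps $z^{k}E$ onto $z^{k}E$ for every $k\in\mathbb{Z}$: it preserves the index of each Fourier coefficient.

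Consequently, for $f=\sum_{k\in\mathbb{Z}}a_kz^k$ we get $C_{\mu,\lambda}f=\sum_{k}\mu\bar\lambda^{k}\Gamma(a_k)z^k$. Then
$$P_{+}C_{\mu,\lambda}f=\sum_{k\ge0}\mu\bar\lambda^{k}\Gamma(a_k)z^k=C_{\mu,\lambda}\Big(\sum_{k\ge0}a_kz^k\Big)=C_{\mu,\lambda}P_{+}f.$$
Convergence of the series in $L^{2}(E)$ follows because $\|\mu\bar\lambda^{k}\Gamma(a_k)\|_E=\|a_k\|_E$, since $\Gamma$ is isometric. Equivalently, one can observe that $C_{\mu,\lambda}$ leaves both $H^{2}(E)$ and its orthogonal complement $\overline{z}\,\overline{H^{2}}(E)$ invariant, and conclude by the orthogonal decomposition.

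The only point needing care is the passage from monomials to general $f$. We need that $C_{\mu,\lambda}$ is continuous and acts termwise on the Fourier series, which holds because it is an isometry by the preceding Lemma. I do not expect a real obstacle. The main thing is to note explicitly that the reflection $z\mapsto\lambda\bar z$, combined with the antilinear $\Gamma$, sends $z^{k}$ to a multiple of $z^{k}$ rather than $z^{-k}$.
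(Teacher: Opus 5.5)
Your proposal is correct and follows essentially the same route as the paper: both show that $C_{\mu,\lambda}$ sends $\sum_k a_k z^k$ to $\mu\sum_k \bar\lambda^{k}\Gamma(a_k)z^k$, so it preserves each Fourier index, and then compare the parts with $k\ge 0$ on both sides. Your extra justification of the termwise action, via continuity of the real-linear isometry $C_{\mu,\lambda}$ together with density of trigonometric polynomials, makes explicit a step the paper leaves implicit.
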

\begin{proof} Let $f\in L^2(E)$. Then
$$f(z)=\sum_{-\infty}^{\infty}a_{n}z^n,$$ and 
$$(C_{\mu, \lambda}f)(z)=\mu \sum_{-\infty}^{\infty}\bar \lambda^{n} \Gamma(a_{n})z^n.$$
Therefore,
$$P_{+}C_{\mu, \lambda}f=\mu \sum_{0}^{\infty}\bar \lambda^{n} \Gamma(a_{n})z^n=C_{\mu, \lambda}P_{+}f,$$
which proves the required result. 
\end{proof}

Theorem \ref{CSTO} is one of the main result of this section. It is a generalization of \cite[Theorem 2.2] {KKL}. In other words, it characterizes the complex symmetry of block Toeplitz operators with respect to the conjugation $C_{\mu, \lambda}$.

%Let $\Gamma$ be a conjugation on $E$, and let $\Phi\in L^{\infty}(\mathcal{L}(E))$. We say that $\Phi(e^{it})$ is $\Gamma$-symmetric if $\Gamma \Phi(e^{it}) \Gamma=\Phi(e^{it})^{*}$,~a.e. on $\mathbb{T}$.

\begin{theorem}\label{CSTO}
Let $\Phi\in L^{\infty}(\mathcal{L}(E))$ and $\Gamma$ be a conjugation on $E$. Let $C_{\mu,\lambda}$ be a conjugation as  defined in \eqref{Gam}. Then
 $T_{\Phi}$ is complex symmetric with respect to the conjugation $C_{\mu,\lambda}$ if and only if $\Gamma \Phi(\lambda \bar z)\Gamma=\Phi(z)^{*}$.
  \end{theorem}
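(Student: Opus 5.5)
The plan is to compute $C_{\mu,\lambda}T_{\Phi}C_{\mu,\lambda}$ directly and identify it as the block Toeplitz operator with a transformed symbol. The comparison with $T_{\Phi}^{*}=T_{\Phi^{*}}$, where $\Phi^*(z)=\Phi(z)^*$, then reduces to the injectivity of the symbol map $\Phi\mapsto T_\Phi$ on $L^\infty(\oL)$.

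First compute $C_{\mu,\lambda}M_{\Phi}C_{\mu,\lambda}$ on $L^2(E)$. For $f\in L^2(E)$ we have $(C_{\mu,\lambda}f)(z)=\mu\Gamma(f(\lambda\bar z))$. Substituting $z\mapsto\lambda\bar z$ and using that $\bar\mu\mu=1$, $\Gamma^2=I$ and $\lambda\overline{\lambda\bar z}=z$, we get
\begin{equation*}
(C_{\mu,\lambda}M_{\Phi}C_{\mu,\lambda}f)(z)=\mu\Gamma\big(\Phi(\lambda\bar z)\,\bar\mu\,\Gamma(f(z))\big)=\Gamma\Phi(\lambda\bar z)\Gamma\, f(z).
\end{equation*}
The constants combine correctly because $\Gamma$ is antilinear, so $\Gamma(\bar\mu x)=\mu\Gamma x$ and $\mu\cdot\bar\mu$ cancels. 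Set $\Psi(z):=\Gamma\Phi(\lambda\bar z)\Gamma$. For each $z$, $\Psi(z)$ is a linear operator on $E$, being a product of two antilinear maps and a linear one. It is bounded with $\|\Psi(z)\|=\|\Phi(\lambda\bar z)\|$, since $\Gamma$ is isometric, and it is measurable in $z$. Hence $\Psi\in L^\infty(\oL)$ and $C_{\mu,\lambda}M_\Phi C_{\mu,\lambda}=M_\Psi$.

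Next, use the Proposition that $C_{\mu,\lambda}$ commutes with $P_+$, together with $C_{\mu,\lambda}H^2(E)=H^2(E)$, which follows from the same Fourier computation. For $f\in H^2(E)$ this gives
\begin{equation*}
C_{\mu,\lambda}T_\Phi C_{\mu,\lambda}f=C_{\mu,\lambda}P_+M_\Phi C_{\mu,\lambda}f=P_+C_{\mu,\lambda}M_\Phi C_{\mu,\lambda}f=P_+M_\Psi f=T_\Psi f.
\end{equation*}
Also $T_\Phi^*=T_{\Phi^*}$, since $\langle P_+\Phi f,g\rangle=\langle f,\Phi^*g\rangle$ for $f,g\in H^2(E)$. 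So $T_\Phi$ is $C_{\mu,\lambda}$-symmetric if and only if $T_\Psi=T_{\Phi^*}$, that is, $T_{\Psi-\Phi^*}=0$.

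The remaining step is the injectivity of the symbol map: if $\Omega\in L^\infty(\oL)$ and $T_\Omega=0$, then $\Omega=0$ a.e. This follows by testing on the basis $\mathcal{B}$. We have $\langle T_\Omega z^k e_j, z^m e_l\rangle=\langle \Omega_{m-k}e_j,e_l\rangle$, where $\Omega_r$ is the $r$-th Fourier coefficient of $\Omega$. Letting $k,m\geq 0$ range over all values makes $m-k$ run over all integers, so every Fourier coefficient of every matrix entry of $\Omega$ vanishes, and hence $\Omega=0$. Applied to $\Omega=\Psi-\Phi^*$, this gives $\Gamma\Phi(\lambda\bar z)\Gamma=\Phi(z)^*$ a.e. The converse direction is immediate from the displayed identity $C_{\mu,\lambda}T_\Phi C_{\mu,\lambda}=T_\Psi$.

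The main point needing care is the first computation. One has to check that the scalar $\mu$ passes through the antilinear $\Gamma$ correctly, and that $\Psi$ is genuinely a linear, essentially bounded operator-valued symbol, so that the Toeplitz calculus applies. Once that is settled, the rest is routine.
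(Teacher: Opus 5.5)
Your proposal is correct and follows essentially the paper's argument: both compute $C_{\mu,\lambda}T_\Phi C_{\mu,\lambda}=T_{\Gamma\Phi(\lambda\bar z)\Gamma}$ using $C_{\mu,\lambda}P_+=P_+C_{\mu,\lambda}$ and compare it with $T_\Phi^*=T_{\Phi^*}$, and you additionally justify the injectivity of $\Omega\mapsto T_\Omega$, which the paper uses without comment. One small slip: in your first display the inner scalar should be $\mu$, not $\bar\mu$, since $(M_\Phi C_{\mu,\lambda}f)(\lambda\bar z)=\mu\Phi(\lambda\bar z)\Gamma(f(z))$, and then $\Gamma(\mu x)=\bar\mu\Gamma x$ gives the cancelling factor $\mu\bar\mu=1$ (as written, your display would produce $\mu^2$).
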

\begin{proof}
Let $\Phi\in L^{\infty}(\mathcal{L}(E))$. Then for any $f\in H^{2}(E)$, we have
\begin{align*}
C_{\mu,\lambda}T_{\Phi}C_{\mu, \lambda}f
&=C_{\mu, \lambda}T_{\Phi}(\mu \Gamma f(\lambda \bar z))\\
&=C_{\mu,\lambda}P_{+}(\mu \Phi \Gamma f(\lambda \bar z))\\
&=P_{+}\bar \mu C_{\mu, \lambda}( \Phi \Gamma f(\lambda \bar z))\\
&=P_{+}( \Gamma \Phi(\lambda \bar z) \Gamma f(z))\\
&=T_{\Gamma \Phi(\lambda \bar z)\Gamma}f.
\end{align*}
%It follows that $C_{\mu,\lambda}\Phi f -\Phi^{*} C_{\mu,\lambda}f\in H^{2}_{-}(E)=[H^{2}(E)]^{\perp}$. Therefore, for every $h\in H^{2}(E)$, we have
%$$\langle (C_{\mu,\lambda}\Phi f   -\Phi^{*} C_{\mu,\lambda}f), h\rangle=0,$$
%i.e.,
%$$\langle \mu\Gamma \Phi(\lambda \bar z)f(\lambda \bar z)   -\Phi^{*}(z) \mu\Gamma f(\lambda \bar z) , h\rangle=0,$$
%and
%\begin{equation}\label{con}
%\langle \Gamma \Phi(\lambda \bar z)f(\lambda \bar z)   -\Phi^{*}(z)\Gamma f(\lambda \bar z) , h\rangle=0.
%\end{equation}
%Equation \eqref{con} holds only when $\Gamma \Phi(\lambda\bar z)\Gamma=\Phi(z)^{*}$ a.e. on $\mathbb{T}$.
 %   }
It follows that $T_{\Phi}$ is complex symmetric with respect to the conjugation $C_{\mu, \lambda}$ if and only if $$T_{\Gamma\Phi(\lambda \bar z)\Gamma}=T_{\Phi(z)^{*}}.$$
Hence, $T_{\Phi}$ is complex symmetric if and only if $$\Gamma \Phi(\lambda\bar z)\Gamma=\Phi(z)^{*}\quad \text{a.e. on }~\mathbb{T}.$$
\end{proof}

For $\Phi\in L^{\infty}(\oL)$, we can write 
\begin{align*}
\Phi(z)
&=\sum_{-\infty}^{\infty}\Phi_{n}z^n\\
&=\sum_{n=-\infty}^{-1}\Phi_{n}z^n+\Phi_{0}+\sum_{n=1}^{\infty}\Phi_{n}z^n\\
&=\sum_{n=0}^{\infty}\Phi_{-n}\bar z^n+\sum_{n=1}^{\infty}\Phi_{n}z^n\\
&=\Phi_{+}(z)+\Phi_{-}(z)^{*}
%&=\sum_{n=1}^{\infty}  \Phi_{n}\lambda^n\bar %z^n+\sum_{n=0}^{\infty}\Phi_{n}z^n.
\end{align*} 
where 
$$\Phi_{+}=\sum_{n=1}^{\infty}\Phi_{n}z^n ~~\text{and} ~~\Phi_{-}(z)=\sum_{n=0}^{\infty}\Phi_{-n}^{*}z^n.$$
The adjoint of $\Phi$ is given by 
\begin{align*}
\Phi(z)^{*}
&=\sum_{n=0}^{\infty}  \Phi_{-n}^* z^n+\sum_{n=1}^{\infty}\Phi^{*}_{n}\bar z^n\\
&=\sum_{n=1}^{\infty}  \Phi_{n}^*\bar z^n+\sum_{n=0}^{\infty}\Phi^{*}_{-n}z^n
\end{align*}

\medskip

 In Theorem \ref{Gen1}, we characterize the complex symmetry of block Toeplitz operators $T_{\Phi}$ with respect to the $\Gamma$ twisted conjugation $C_{\mu, \lambda}$ on $H^2(E)$, where $E$ is finite dimensional Hilbert space.

\begin{theorem}\label{Gen1}
Let $\Phi(z)=\sum_{n=-\infty}^{\infty}\Phi_{n}z^n\in L^{\infty}(\oL)$. Then the following statements are equivalent:
\medskip

{\em (i)} $T_{\Phi}$ is complex symmetric with respect to the conjugation $C_{\mu,\lambda}$.\par
\medskip

{\em (ii)} $\Phi_{-n}=\lambda^n \Gamma\Phi_n^{*}\Gamma$ for $n\ge0$.\par
\medskip
{\em (iii)} $\Phi(z)=\sum_{n=1}^{\infty}\Phi_n z^n+\sum_{n=0}^{\infty}\lambda^n\Gamma \Phi_{n}^{*}\Gamma  \overline{z}^n.$\par
\medskip
{\em (iv)} $\Phi(z)=\Phi_{+}(z)+\Phi_{0}+\Gamma \Phi_{+}(\lambda \bar z)^{*}\Gamma $, where $\Phi_{+}\in z H^2(\oL)$.
\end{theorem}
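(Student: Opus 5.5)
The plan is to reduce everything to Theorem \ref{CSTO} and then compare Fourier coefficients. By Theorem \ref{CSTO}, statement (i) is equivalent to the identity
$$\Gamma\Phi(\lambda\bar z)\Gamma=\Phi(z)^{*}\quad\text{a.e. on }\mathbb{T}.$$
Both sides lie in $L^{\infty}(\oL)\subset L^{2}(\oL)$. Two such functions coincide a.e. exactly when all their Fourier coefficients agree, so I will compute the coefficients on each side.

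\textbf{Left side.} Write $\Phi(z)=\sum_n\Phi_nz^n$. Then
$$\Phi(\lambda\bar z)=\sum_n\Phi_n\lambda^n\bar z^n=\sum_m\lambda^{-m}\Phi_{-m}z^m.$$
Conjugating by $\Gamma$ requires care because $\Gamma$ is antilinear: $\Gamma(\alpha A)\Gamma=\bar\alpha\,\Gamma A\Gamma$. Since $|\lambda|=1$, this gives
$$\Gamma\Phi(\lambda\bar z)\Gamma=\sum_m\lambda^{m}\,\Gamma\Phi_{-m}\Gamma\, z^m.$$
Here $\Gamma A\Gamma$ is linear whenever $A$ is linear. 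Moreover, $\Gamma$ is a bounded real-linear isometry, so conjugating by it commutes with the $L^2$-convergent series.

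\textbf{Right side.} Similarly,
$$\Phi(z)^{*}=\sum_m\Phi_{-m}^{*}z^m.$$
Comparing coefficients of $z^m$ gives
$$\lambda^m\,\Gamma\Phi_{-m}\Gamma=\Phi_{-m}^{*}\quad\text{for all } m\in\mathbb{Z}.$$
Conjugating both sides by $\Gamma$ and using $\Gamma^2=I$, this becomes $\Phi_{-m}=\bar\lambda^{m}\,\Gamma\Phi_{-m}^{*}\Gamma$. Take $m=-n$ with $n\ge0$. This yields $\Phi_{n}=\lambda^{n}\Gamma\Phi_{n}^{*}\Gamma$, which is equivalent, by applying $*$ and conjugating by $\Gamma$, to
$$\Phi_{-n}=\lambda^{n}\Gamma\Phi_n^{*}\Gamma.$$
The relations for negative $m$ are the same family of relations after applying adjoints, because $(\Gamma A\Gamma)^{*}=\Gamma A^{*}\Gamma$ for a conjugation $\Gamma$. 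So the full family of coefficient identities is equivalent to (ii) restricted to $n\ge0$. This establishes (i)$\Leftrightarrow$(ii).

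\textbf{(ii)$\Leftrightarrow$(iii).} Split the series as $\Phi=\sum_{n\ge1}\Phi_nz^n+\sum_{n\ge0}\Phi_{-n}\bar z^n$ and substitute (ii) into the second sum. For the converse, read off the coefficients of $\bar z^n$ using uniqueness of Fourier coefficients.

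\textbf{(iii)$\Leftrightarrow$(iv).} Set $\Phi_+=\sum_{n\ge1}\Phi_nz^n\in zH^2(\oL)$. Then
$$\Gamma\Phi_+(\lambda\bar z)^{*}\Gamma=\sum_{n\ge1}\Gamma\Phi_n^{*}\Gamma\,\lambda^{n}\bar z^{n},$$
using $(\alpha A)^{*}=\bar\alpha A^{*}$ followed by the antilinear conjugation, which restores $\lambda^n$. This is precisely the $n\ge1$ part of the second sum in (iii). For the $n=0$ term, (ii) with $n=0$ gives $\Phi_0=\Gamma\Phi_0^{*}\Gamma$, so that term equals $\Phi_0$. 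In (iii) this $n=0$ term appears alone, so (iii) reads $\Phi_++\Phi_0+\Gamma\Phi_+(\lambda\bar z)^{*}\Gamma$. That is (iv), interpreting $\Phi_0$ as satisfying this constraint. Conversely, given (iv), expand the expression and compare coefficients to recover (ii).

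\textbf{Main obstacle.} The step I expect to be delicate is the bookkeeping of scalars under the antilinear $\Gamma$, namely whether $\lambda^n$ or $\bar\lambda^n$ appears, together with checking that the positive-index and negative-index relations are genuinely the same condition. Both reduce to the identities $\Gamma(\alpha A)\Gamma=\bar\alpha\,\Gamma A\Gamma$ and $(\Gamma A\Gamma)^{*}=\Gamma A^{*}\Gamma$. I would verify the second identity first, via
$$\langle\Gamma A\Gamma x,y\rangle=\langle\Gamma y,A\Gamma x\rangle=\langle A^{*}\Gamma y,\Gamma x\rangle=\langle x,\Gamma A^{*}\Gamma y\rangle.$$
I would also note that in (iv) the $\Phi_0$ term implicitly carries the constraint $\Phi_0=\Gamma\Phi_0^{*}\Gamma$ coming from (ii) with $n=0$.
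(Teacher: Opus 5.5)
Your overall route, reducing (i) to the symbol identity of Theorem \ref{CSTO} and then comparing Fourier coefficients, is sound. However, your coefficient computation for the left-hand side is wrong, and you reach (ii) only through an invalid step.

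The error is in how the scalars pass through $\Gamma$. Since $\Gamma$ acts pointwise on the values in $E$, at each $z\in\mathbb{T}$ the factor $z^m$ is a scalar exactly like $\lambda^{-m}$, so it must be conjugated too: $\Gamma\bigl((\lambda\bar z)^n\Phi_n\bigr)\Gamma=\bar\lambda^{n}z^{n}\,\Gamma\Phi_n\Gamma$. Hence
\[
\Gamma\Phi(\lambda\bar z)\Gamma=\sum_{n}\bar\lambda^{n}\,\Gamma\Phi_n\Gamma\,z^{n},
\]
not $\sum_m\lambda^m\Gamma\Phi_{-m}\Gamma\, z^m$. In your (iii)$\Leftrightarrow$(iv) step you do conjugate $z^n$, so the two parts of your proposal treat $z$ inconsistently.

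This error propagates:
\begin{itemize}
\item Your identity $\lambda^m\Gamma\Phi_{-m}\Gamma=\Phi_{-m}^*$ relates each coefficient only to itself. For $E=\mathbb{C}$, $\Gamma x=\bar x$ and $\lambda=1$ it holds for every symbol. It would then make the unilateral shift $T_z$ complex symmetric, contradicting Theorem \ref{STO}.
\item Your closing claim, that $\Phi_n=\lambda^n\Gamma\Phi_n^*\Gamma$ is ``equivalent, by applying $*$ and conjugating by $\Gamma$,'' to $\Phi_{-n}=\lambda^n\Gamma\Phi_n^*\Gamma$, is a non sequitur. Those operations cannot move the index from $n$ to $-n$.
\item There is also a sign slip: conjugating $\lambda^m\Gamma\Phi_{-m}\Gamma=\Phi_{-m}^*$ by $\Gamma$ gives $\Phi_{-m}=\lambda^m\Gamma\Phi_{-m}^*\Gamma$, not $\bar\lambda^m$.
\end{itemize}

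The repair is short. Compare the coefficient of $z^n$ above with that of $\Phi(z)^*=\sum_n\Phi_{-n}^*z^n$. This gives $\bar\lambda^{n}\Gamma\Phi_n\Gamma=\Phi_{-n}^*$ for all $n\in\mathbb{Z}$. Taking adjoints, using your (correct) identity $(\Gamma A\Gamma)^*=\Gamma A^*\Gamma$, gives exactly $\Phi_{-n}=\lambda^n\Gamma\Phi_n^*\Gamma$. The relations for $n<0$ are the $n>0$ ones after adjoint and $\Gamma$-conjugation, as you observed.

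Once repaired, your argument is a valid alternative to the paper's. The paper instead evaluates $C_{\mu,\lambda}T_\Phi$ and $T_\Phi^*C_{\mu,\lambda}$ on the basis vectors $z^ke_j$, using the normalization $\Gamma e_j=e_j$. Your version needs no such basis but relies on Theorem \ref{CSTO}.

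The rest of your proposal is fine. Your caveat about (iv) is correct and worth keeping. As literally written, (iv) holds for every constant symbol $\Phi=\Phi_0$ (take $\Phi_+=0$). So (iv)$\Rightarrow$(i) requires the extra condition $\Phi_0=\Gamma\Phi_0^*\Gamma$, which the paper's ``the converse is immediate'' overlooks.
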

\begin{proof}
$(i)\Leftrightarrow (ii)$ 
%Assume that $T_{\Phi}$ is complex symmetric with respect to the conjugation $C_{\mu, \lambda}$. Then $$C_{\mu, \lambda} T_{\Phi}=T^{*}_{\Phi}C_{\mu, \lambda}.$$
We know that the set 
$$B=\{z^k e_j: k\geq 0, ~j=1,2,...,n\}$$ is an orthonormal basis for $H^2(E)$. Then
\begin{align}
C_{\mu, \lambda}T_{\Phi}(z^k e_j)
&=\mu \sum_{n=-k}^{\infty}\overline{\lambda}^{(n+k)}z^{n+k}\Gamma \Phi_{n}e_j\nn\\
&=\mu \sum_{n=-k}^{\infty}\overline{\lambda}^{(n+k)}z^{n+k}\Gamma \Phi_{n}\Gamma e_j.\label{CT}
\end{align}
For $$\Phi(z)=\sum_{-\infty}^{\infty}\Phi_{n}z^n, \quad \text{we have}\quad 
\Phi(z)^{*}=\sum_{-\infty}^{\infty}\Phi_{-n}^{*}z^n.$$ Also, we have 
\begin{align}
T^{*}_{\Phi}C_{\mu, \lambda}(z^k e_j)
&=\mu \overline{\lambda}^kT_{\Phi}^{*}(z^k e_j)\nn\\
&=\mu \overline{\lambda}^kT_{\Phi^*}(z^k e_j)\nn\\
&=\mu \sum _{n=-k}^{\infty}\overline{\lambda}^k z^{n+k}\Phi^{*}_{-n} e_j.\label{TC}
\end{align}
By \eqref{CT} and \eqref{TC}, $C_{\mu, \lambda} T_{\Phi}=T^{*}_{\Phi}C_{\mu, \lambda}$ if and only if
$$\Phi_{-n}=\lambda^n \Gamma \Phi_{n}^{*}\Gamma \quad \text{for} ~~n\geq 0.$$ 

\ni $(i)\Leftrightarrow (iii)$ Let $T_{\Phi}$ is complex symmetric. Then by $(ii)$ we have 
\[\Phi_{-n}=\lambda^n \Gamma \Phi_{n}^{*}\Gamma \quad \text{for}~~n\geq 0.\]
Now
\begin{align*}
\Phi(z)
&=\sum_{-\infty}^{\infty}\Phi_{n}z^n\\
&=\sum_{n=-\infty}^{0}\Phi_{n}z^n+\sum_{n=1}^{\infty}\Phi_{n}z^n\\
&=\sum_{n=0}^{\infty}\Phi_{-n}\bar z^{n}+\sum_{n=1}^{\infty}\Phi_{n}z^n\\
&=\sum_{n=1}^{\infty}\Phi_{n}z^n+\sum_{n=0}^{\infty}\lambda^n\Gamma \Phi_{n}^{*}\Gamma \bar z^n.
\end{align*}
Conversely,  we recall that for $$\Phi(z)=\sum_{n=-\infty}^{\infty}\Phi_{n}z^n$$ where 
\begin{equation}\label{coeff}
\Phi_{n}x=\frac{1}{2\pi}\int_{0}^{2\pi}\bar z^n \Phi(z)x~dt\quad (n=0,\pm1, \pm 2,...)
\end{equation}
for $x\in E$.
Let $$\Phi(z)=\sum_{m=1}^{\infty}\Phi_{m}z^m+\sum_{m=0}^{\infty}\lambda^m\Gamma \Phi_{m}^{*}\Gamma \bar z^m$$ Then 
\begin{align*}
    z^n\Phi(z)e_j
    &=z^n\Big [\sum_{m=1}^{\infty}\Phi_{m}z^m+\sum_{m=0}^{\infty}\lambda^m\Gamma \Phi_{m}^{*}\Gamma \bar z^m\Big ]e_j\\
    &=\Big [\sum_{m=1}^{\infty}\Phi_{m}z^{n+m}+\sum_{m=0}^{\infty}\lambda^m\Gamma \Phi_{m}^{*}\Gamma z^{n-m}\Big ]e_j.
\end{align*}
By \eqref{coeff}, we have 
\begin{align*}
\Phi_{-n}e_j
&=\frac{1}{2\pi}\int_{0}^{2\pi}z^{n}\Phi(z)e_j dt\\
&=\frac{1}{2\pi}\int_{0}^{2\pi}\Big [\sum_{m=1}^{\infty}\Phi_{m}z^{n+m}+\sum_{m=0}^{\infty}\lambda^m\Gamma \Phi_{m}^{*}\Gamma z^{n-m}\Big ]e_jdt\\
&=\Big[\sum_{m=1}^{\infty}\Phi_{m}\frac{1}{2\pi}\int_{0}^{2\pi}z^{n+m} dt+\sum_{m=0}^{\infty}\lambda^{m}\Gamma \Phi_{m}^{*}\Gamma\frac{1}{2\pi}\int_{0}^{2\pi}z^{n-m}dt\Big]e_j.
\end{align*}
Using the orthogonality relation on the unit circle
$$\delta_{n,m}=\frac{1}{2\pi}\int_{0}^{2\pi}z^{n-m}dt=\begin{cases}
         1, ~~n=m,\\
         0, ~~n\ne m.
    \end{cases}$$
    Hence, the above integral becomes
$$\Phi_{-n}e_j=\Big[\sum_{m=1}^{\infty}\delta_{n,-m}\Phi_{m}+\sum_{m=0}^{\infty}\lambda^{m}\delta_{n,m}\Gamma \Phi_{m}^{*}\Gamma\Big] e_j.$$
If $n\geq 0$, then we have $\delta_{n,-m}=0$ and only $m=n$ term remains in the last sum. Therefore, 
$$\Phi_{-n}e_j=\lambda^{n}\Gamma\Phi_{n}^{*}\Gamma e_j. $$ It follows that $$\Phi_{-n}=\lambda^n\Gamma \Phi_{n}^{*}\Gamma 
\quad \text{for}~~n\geq 0.$$ Therefore, by (ii), $T_{\Phi}$ is complex symmetric. 

\medskip

\ni $(i)\Leftrightarrow (iv)$ Recall that
\begin{align}
\Phi(z)
&=\sum_{n=1}^{\infty}  \Phi_{n}z^n+\sum_{n=0}^{\infty}\Phi_{-n}\bar z^n\nn\\
&=\sum_{n=1}^{\infty}  \Phi_{n}z^{n}+\sum_{n=0}^{\infty}\Phi_{-n}\bar z^{n}\nn\\
&=\Phi_{+}(z)+\Phi_{-}(z)^{*},\label{decomp}
\end{align}
where
$$\Phi_{+}(z)=\sum_{n=1}^{\infty}  \Phi_{n}z^{n}\quad \text{and} \quad \Phi_{-}(z)=\sum_{n=0}^{\infty}\Phi_{-n}^{*}z^n.$$
Assume that $T_{\Phi}$ is complex symmetric with respect to $C_{\mu, \lambda}$. Then we have 
$$\Phi_{-n}=\lambda^{n}\Gamma \Phi_{n}^{*}\Gamma \quad \text{for}~~n\geq 0.$$ 
Therefore,
\begin{align*}
\Phi_{-}(z)^{*}
&=\sum_{n=0}^{\infty}\Phi_{-n}\bar z^{n}\\
&=\Gamma\Phi_{0}^{*}\Gamma+\sum_{n=1}^{\infty}\lambda^n\Gamma \Phi_{n}^{*}\Gamma \bar z^n\\
&=\Phi_{0}+\sum_{n=1}^{\infty}\Gamma \Phi_{n}^{*}\bar \lambda^{n} z^n\Gamma \\
&=\Phi_{0}+\Gamma \Big[\sum_{n=1}^{\infty} \Phi_{n}^{*}\bar \lambda^n z^n\Big]\Gamma \\
&=\Phi_{0}+\Gamma \Big[\sum_{n=1}^{\infty} \Phi_{n} \lambda^n \bar z^n\Big]^{*}\Gamma \\
&=\Phi_{0}+\Gamma \Phi_{+}(\lambda \bar z)^{*}\Gamma.
\end{align*}
By \eqref{decomp}, it follows that 
$$\Phi(z)=\Phi_{+}(z)+\Phi_{0}+\Gamma \Phi_{+}(\lambda \bar z)^{*}\Gamma,$$
where $\Phi_{+}\in zH^2(\oL)$.

The converse is immediate and is left to the reader. 
\end{proof}

\medskip

Theorem \ref{Gen2} generalizes Theorem 2.11 of \cite{KKL} by replacing the conjugation on $H^2(\mathbb{C}^2)$ with the conjugation $C_{\mu, \lambda}$, which is generated by a conjugation $\Gamma$ on $E$. Here, we present a more general proof of that theorem which does not involve scalar Toeplitz operators (Toeplitz operators on the classical Hardy-Hilbert space $H^2$).

\begin{theorem}\label{Gen2}
Let $\Phi\in L^{\infty}(\oL)$. Then $T_{\Phi}$ is complex symmetric with the conjugation $C_{\mu, \lambda}$ if and only if $M_{\Phi}C_{\mu, \lambda}=C_{\mu, \lambda}M_{\Phi^{*}}$.
\end{theorem}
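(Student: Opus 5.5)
The plan is to reduce both conditions to the pointwise symbol identity of Theorem \ref{CSTO}, namely $\Gamma\Phi(\lambda\bar z)\Gamma=\Phi(z)^*$ a.e. on $\mathbb{T}$.

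First I compute $C_{\mu,\lambda}M_{\Phi}C_{\mu,\lambda}$ on all of $L^2(E)$. This is the same calculation as in the proof of Theorem \ref{CSTO}, but without the projection $P_+$. For $f\in L^2(E)$ we have
\begin{align*}
(C_{\mu,\lambda}M_\Phi C_{\mu,\lambda}f)(z)
&=\mu\Gamma\big(\Phi(\lambda\bar z)\,\mu\Gamma f(\lambda\bar\lambda z)\big)\\
&=\bar\mu\mu\,\Gamma\Phi(\lambda\bar z)\Gamma f(z)\\
&=\Gamma\Phi(\lambda\bar z)\Gamma f(z).
\end{align*}
The key point is that $\Gamma$ is antilinear, which turns $\mu$ into $\bar\mu$. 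Hence $C_{\mu,\lambda}M_\Phi C_{\mu,\lambda}=M_{\Psi}$, where $\Psi(z)=\Gamma\Phi(\lambda\bar z)\Gamma$. Note that $\Psi$ is linear-valued, since it is a product of two antilinear maps with a linear one, and it lies in $L^\infty(\oL)$.

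Since $C_{\mu,\lambda}^2=I$, the condition $M_\Phi C_{\mu,\lambda}=C_{\mu,\lambda}M_{\Phi^*}$ is equivalent to $C_{\mu,\lambda}M_\Phi C_{\mu,\lambda}=M_{\Phi^*}$, that is, to $M_\Psi=M_{\Phi^*}$. Testing both multiplication operators on the constant functions $e_j\in L^2(E)$ shows that $M_\Psi=M_{\Phi^*}$ holds if and only if $\Psi(z)=\Phi(z)^*$ a.e. on $\mathbb{T}$; the converse direction is trivial.

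Finally, by Theorem \ref{CSTO}, $\Psi=\Phi^*$ a.e. is exactly the condition for $T_\Phi$ to be $C_{\mu,\lambda}$-symmetric. Chaining the equivalences proves the theorem. To keep the argument self-contained and not reliant on the last step of Theorem \ref{CSTO}, I will also sketch the direct route. For the forward direction: if the multiplication operators intertwine, compress with $P_+$. By the proposition that $C_{\mu,\lambda}$ commutes with $P_+$, we get $C_{\mu,\lambda}T_\Phi C_{\mu,\lambda}=P_+M_\Psi|_{H^2(E)}=T_\Psi=T_{\Phi^*}=T_\Phi^*$. For the converse, $T_\Psi=T_{\Phi^*}$ forces $\Psi=\Phi^*$, by injectivity of the symbol map $\Phi\mapsto T_\Phi$.

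The main obstacle is this injectivity step. Theorem \ref{CSTO} asserts it only implicitly, so I would justify it via Fourier coefficients, as in Theorem \ref{Gen1}. The matrix entries $\langle T_\Xi z^k e_j, z^m e_l\rangle$ equal $\langle \Xi_{m-k}e_j,e_l\rangle$, and as $k,m\ge 0$ vary, $m-k$ ranges over all integers. So $T_\Xi=0$ implies every coefficient $\Xi_n=0$, and hence $\Xi=0$ a.e.
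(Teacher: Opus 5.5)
Your proposal is correct and follows essentially the same route as the paper: one direction goes through the symbol identity $\Gamma\Phi(\lambda\bar z)\Gamma=\Phi(z)^*$ of Theorem~\ref{CSTO}, and the other compresses $M_\Phi C_{\mu,\lambda}=C_{\mu,\lambda}M_{\Phi^*}$ with $P_+$ using $C_{\mu,\lambda}P_+=P_+C_{\mu,\lambda}$. The differences are minor: your operator identity $C_{\mu,\lambda}M_\Phi C_{\mu,\lambda}=M_\Psi$ replaces the paper's check on the basis vectors $z^ke_j$, and your Fourier-coefficient argument makes explicit the injectivity of $\Xi\mapsto T_\Xi$, which the paper uses tacitly in Theorem~\ref{CSTO}.
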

\begin{proof}
Let $T_{\Phi}$ is complex symmetric with the conjugation $C_{\mu, \lambda}$. Then, by Theorem \ref{CSTO}, we have 
$$\Gamma\Phi(\lambda \bar z)\Gamma=\Phi(z)^{*}.$$ 
Then, for $k=0,\pm1, \pm2,...,$ we have
\begin{align*}
M_{\Phi}C_{\mu, \lambda}(z^k e_j)
&=\mu M_{\Phi}( \Gamma (\lambda ^k \bar z^k e_j))\\
&=\mu \Phi(z) \Gamma (\lambda ^k \bar z^k e_j)\\
&=\mu \Gamma \Phi(\lambda \bar z)^{*}\Gamma^{2} (\lambda ^k \bar z^k e_j)\\
&=\mu \Gamma \Phi(\lambda \bar z)^{*} (\lambda ^k \bar z^k e_j)\\
&=C_{\mu, \lambda}(\Phi(z)^{*}z^k e_j)\\
&=C_{\mu, \lambda}M_{\Phi}^{*}(z^k e_j).
\end{align*}
It follows that $$M_{\Phi}C_{\mu, \lambda}f=C_{\mu, \lambda}M_{\Phi^*}f\quad \text{for all}\quad f\in L^2(E).$$
Conversely, suppose that $M_{\Phi}C_{\mu, \lambda}=C_{\mu, \lambda}M_{\Phi^*}$. Then, for $f\in L^2(E)$, we have
\begin{align*}
T_{\Phi}C_{\mu, \lambda}f
&=P_{+}(M_{\Phi}C_{\mu, \lambda}f)\\
&=P_{+}(C_{\mu, \lambda}M_{\Phi^*}f)\\
&=C_{\mu, \lambda}P_{+}M_{\Phi^*}f\\
&=C_{\mu, \lambda}T_{\Phi^*}f.
\end{align*}
This completes the proof. 
\end{proof}

\section{Truncated Toeplitz operators}\label{s4}
In this section we discuss the applications of the conjugation $C_{\mu, \lambda}$ to matrix valued truncated Toeplitz operators.

 In the scalar case, each model space $K_{\theta}$ is equipped with a natural conjugation $C_{\theta}$, defined in terms of boundary functions by
$(C_\theta f)(z)=\theta(z)\overline{z}\overline{f(z)}$. The conjugation $C_\theta$ was considered in \cite{sar}. If $\Theta\in H^\infty(\oL)$ is an inner function and $\Gamma$ is a conjugation on $\hsp$, we can similarly define $C_{\Theta}:L^2(\hsp)\to L^2(\hsp)$ by
\begin{equation}\label{CTheta}
(C_{\Theta}f)(z)=\Theta(z)\overline{z} \Gamma(f(z))\quad \text{a.e. on }~~\mathbb{T}.
\end{equation}
Although $C_{\Theta}$ is obviously an antilinear isometry, it is in general not an involution. A simple computation shows that $C_{\Theta}$ is an involution (and therefore a conjugation) if and only if $\Theta(z)\Gamma\Theta(z)\Gamma=I_{\hsp}$ a.e. on $\mathbb{T}$, i.e., if and only if $\Theta$ is $\Gamma$-symmetric.

If $\Theta$ is $\Gamma$-symmetric, then $C_{\Theta}(\Theta H^2(E))=H^2(E)^{\perp}$ and so
$$C_{\Theta}(K_\Theta)=K_\Theta.$$

\begin{proposition}\label{Mphi}
   Let $\Phi\in L^{\infty}(\oL)$, and let  $\Theta$ be $\Gamma$-symmetric inner function. Let $C_{\Theta}$ be the corresponding conjugation on $L^{2}(E)$ such that $\Phi\Theta=\Theta\Phi$ and $\Phi$ is also $\Gamma$-symmetric. Then we have $C_{\Theta}M_{\Phi}C_{\Theta}=M_{\Phi}^{*}$.
\end{proposition}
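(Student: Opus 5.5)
The plan is to verify the identity pointwise a.e. on $\mathbb{T}$, directly from the definition \eqref{CTheta}. Since $\Theta$ is $\Gamma$-symmetric, $C_{\Theta}$ is a conjugation on $L^2(E)$, so $C_{\Theta}^2=I$. Hence it suffices to show $C_{\Theta}M_{\Phi}C_{\Theta}f=\Phi^{*}f$ for every $f\in L^2(E)$, where $M_{\Phi}^{*}=M_{\Phi^{*}}$. I read ``$\Phi$ is $\Gamma$-symmetric'' as $\Gamma\Phi(z)\Gamma=\Phi(z)^{*}$ a.e. on $\mathbb{T}$, which is the same convention used for Theorem \ref{CSTO}. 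For the inner function I read it as $\Gamma\Theta(z)\Gamma=\Theta(z)^{*}$. This is equivalent to the stated condition $\Theta\Gamma\Theta\Gamma=I_E$, because $\Theta(z)$ is unitary a.e., so $\Theta(z)^{-1}=\Theta(z)^{*}$.

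\textbf{Step 1: the composite operator.} Fix $f\in L^2(E)$ and a point $z\in\mathbb{T}$ where all the relevant identities hold. First, $(C_{\Theta}f)(z)=\Theta(z)\bar z\,\Gamma f(z)$. Applying $M_{\Phi}$ and then $C_{\Theta}$ again gives
\[
(C_{\Theta}M_{\Phi}C_{\Theta}f)(z)=\Theta(z)\bar z\,\Gamma\bigl(\Phi(z)\Theta(z)\bar z\,\Gamma f(z)\bigr).
\]
Because $\Gamma$ is antilinear, the scalar $\bar z$ comes out of $\Gamma$ as $z$, and $\bar z z=1$ on $\mathbb{T}$. This leaves
\[
\Theta(z)\,\Gamma\Phi(z)\Theta(z)\Gamma f(z).
\]

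\textbf{Step 2: insert $\Gamma^2=I_E$ and use the symmetry hypotheses.} By the commutation hypothesis, $\Phi(z)\Theta(z)=\Theta(z)\Phi(z)$. Inserting $\Gamma^2=I_E$ between the two factors gives
\[
\Gamma\Theta(z)\Phi(z)\Gamma=(\Gamma\Theta(z)\Gamma)(\Gamma\Phi(z)\Gamma)=\Theta(z)^{*}\Phi(z)^{*}.
\]
Therefore the expression from Step 1 equals $\Theta(z)\Theta(z)^{*}\Phi(z)^{*}f(z)=\Phi(z)^{*}f(z)$, since $\Theta(z)$ is unitary a.e. This is exactly $(M_{\Phi}^{*}f)(z)$, which proves $C_{\Theta}M_{\Phi}C_{\Theta}=M_{\Phi}^{*}$.

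\textbf{Main obstacle.} The computation is short; the only delicate point is bookkeeping. One must track that $\Gamma$ acts pointwise on values and is antilinear, so it turns $\bar z$ into $z$. One must also put the commutation $\Phi\Theta=\Theta\Phi$ in the correct order, so that $\Gamma\Theta\Gamma$ produces $\Theta^{*}$ adjacent to the leading $\Theta$. If instead one tried to split $\Gamma\Phi\Theta\Gamma$ directly as $(\Gamma\Phi\Gamma)(\Gamma\Theta\Gamma)=\Phi^{*}\Theta^{*}$, the result would be $\Theta\Phi^{*}\Theta^{*}$. Reducing that to $\Phi^{*}$ would still require that $\Phi^{*}$ commutes with $\Theta$, which is precisely why the commutation hypothesis is needed.
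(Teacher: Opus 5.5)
Your proof is correct and is essentially the paper's own computation: you pull $\bar z$ out of the antilinear $\Gamma$ as $z$, reorder using $\Phi\Theta=\Theta\Phi$, turn $\Gamma\Theta\Gamma$ into $\Theta^{*}$ so that $\Theta\Theta^{*}=I_E$ cancels the leading factor, and finish with $\Gamma\Phi\Gamma=\Phi^{*}$. One small slip: the pointwise condition $\Gamma\Phi(z)\Gamma=\Phi(z)^{*}$ you use is the right reading here, but it is not the condition of Theorem~\ref{CSTO}, which involves the reflected symbol $\Phi(\lambda\bar z)$.
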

\begin{proof}
 Here we consider $C_{\Theta}$ as a conjugation on $L^2(E)$. Let $f\in L^2(E)$; we have 
 \begin{align*}
C_{\Theta}M_{\Phi}C_{\Theta}f
&=C_{\Theta}M_{\Phi}(\Theta(z)\bar z \Gamma (f(z))\\
&=C_{\Theta}(\Phi(z)\Theta(z)\bar z \Gamma (f(z)))\\
&=\Theta(z)\bar z\Gamma(\Theta(z)\Phi(z)\bar z \Gamma (f(z)))\\
&=\Theta(z)\Gamma(\Theta(z)\Phi(z) \Gamma (f(z)))\\
&=\Theta(z)\Theta(z)^{*}\Gamma(\Phi(z) \Gamma (f(z)))\\
&=\Gamma\Phi(z) \Gamma (f(z))\\
&=\Phi(z)^{*}(f(z))\\
&=M_{\Phi}^{*}f
 \end{align*}
\end{proof}

The action of $C_{\mu, \lambda}$ on $K_{\Theta}$ is given by the following lemma. 

\begin{lemma}\label{inv}
$C_{\mu, \lambda}K_{\Theta}=K_{\Theta_{\lambda}}$, where $\Theta_{\lambda}(z)=\Gamma \Theta(\lambda \bar z)\Gamma$. 
\end{lemma}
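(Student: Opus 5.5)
The plan is to show that $C_{\mu,\lambda}$ maps $H^2(E)$ onto itself and $\Theta H^2(E)$ onto $\Theta_\lambda H^2(E)$. Since $C_{\mu,\lambda}$ is an antiunitary bijection of $L^2(E)$, it then carries the orthogonal complement of one subspace inside the other onto the corresponding orthogonal complement.

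\emph{Step 1: $\Theta_\lambda$ is an inner function.} Here $\Theta_\lambda(z)x=\Gamma\big(\Theta(\lambda\bar z)\Gamma x\big)$. Write $\Theta(z)=\sum_{n\ge0}\Theta_n z^n$ with $\Theta_n\in\oL$. Because $\Gamma$ is antilinear, it conjugates the scalar factor $\lambda^n\bar z^n$, which gives
$$\Theta_\lambda(z)=\sum_{n\ge0}\bar\lambda^{\,n}\,(\Gamma\Theta_n\Gamma)\,z^n .$$
Each $\Gamma\Theta_n\Gamma$ is a bounded linear operator on $E$, so $\Theta_\lambda\in H^\infty(\oL)$. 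On $\mathbb{T}$ the point $\lambda\bar z$ lies on $\mathbb{T}$, so $\Theta(\lambda\bar z)$ is unitary a.e. Conjugating a unitary by the antiunitary involution $\Gamma$ gives a unitary, so $\Theta_\lambda(z)$ is unitary a.e. Hence $\Theta_\lambda$ is inner and $K_{\Theta_\lambda}$ makes sense.

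\emph{Step 2: the intertwining identity.} For $g\in L^2(E)$ and a.e. $z\in\mathbb{T}$, insert $\Gamma^2=I$ to get
$$(C_{\mu,\lambda}(\Theta g))(z)=\mu\Gamma\big(\Theta(\lambda\bar z)g(\lambda\bar z)\big)=\Gamma\Theta(\lambda\bar z)\Gamma\,\big(\mu\Gamma g(\lambda\bar z)\big)=\Theta_\lambda(z)(C_{\mu,\lambda}g)(z),$$
that is, $C_{\mu,\lambda}M_\Theta=M_{\Theta_\lambda}C_{\mu,\lambda}$.

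By the Proposition that $C_{\mu,\lambda}$ commutes with $P_+$, we have $C_{\mu,\lambda}H^2(E)\subseteq H^2(E)$. Since $C_{\mu,\lambda}^2=I$, this inclusion is an equality. Applying the identity to $g\in H^2(E)$ gives
$$C_{\mu,\lambda}(\Theta H^2(E))=\Theta_\lambda C_{\mu,\lambda}H^2(E)=\Theta_\lambda H^2(E).$$

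\emph{Step 3: passing to orthogonal complements.} Take $f\in K_\Theta$. Then $C_{\mu,\lambda}f\in H^2(E)$. For any $h\in H^2(E)$, write $\Theta_\lambda h=C_{\mu,\lambda}(\Theta g)$ with $g=C_{\mu,\lambda}h\in H^2(E)$, using Step 2. By the identity $\langle C_{\mu,\lambda}u,C_{\mu,\lambda}v\rangle=\langle v,u\rangle$,
$$\langle C_{\mu,\lambda}f,\Theta_\lambda h\rangle=\langle \Theta g,f\rangle=0 .$$
So $C_{\mu,\lambda}K_\Theta\subseteq K_{\Theta_\lambda}$.

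For the reverse inclusion, note that $(\Theta_\lambda)_\lambda=\Gamma\Gamma\Theta(\lambda\bar\lambda z)\Gamma\Gamma=\Theta$. Applying the same argument to $\Theta_\lambda$ gives $C_{\mu,\lambda}K_{\Theta_\lambda}\subseteq K_\Theta$. Applying $C_{\mu,\lambda}$ to both sides and using $C_{\mu,\lambda}^2=I$ yields $K_{\Theta_\lambda}\subseteq C_{\mu,\lambda}K_\Theta$, which gives equality.

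The main obstacle is Step 1, namely the bookkeeping showing that $\Gamma\Theta(\lambda\bar z)\Gamma$ is analytic rather than antianalytic. This hinges on $\Gamma$ conjugating the scalar coefficient $\lambda^n\bar z^n$, so the notation has to be read as the composition $x\mapsto\Gamma(\Theta(\lambda\bar z)\Gamma x)$. Once that is settled, the rest is formal.
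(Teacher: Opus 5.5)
Your proof is correct and follows the same route as the paper: the key identity $C_{\mu,\lambda}(\Theta h)=\Theta_\lambda\, C_{\mu,\lambda}h$, obtained by inserting $\Gamma^2=I$, gives $C_{\mu,\lambda}(\Theta H^2(E))=\Theta_\lambda H^2(E)$, and you then pass to orthogonal complements inside $H^2(E)$. You also supply details the paper leaves implicit: that $\Theta_\lambda$ is again inner, that $C_{\mu,\lambda}H^2(E)=H^2(E)$, and the explicit antiunitary argument for the complement step, where the paper only remarks that $C_{\mu,\lambda}$ is an isometry.
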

\begin{proof}
Let $\Gamma$ be a conjugation on $E$ and $\mu, 
\lambda\in \mathbb{T}$. Then we first have to show that $C_{\mu, \lambda}(\Theta H^2(E))\subset \Theta_{\lambda}H^2(E)$. Let $f\in \Theta H^2(E)$, then $f=\Theta h$ for some $h\in H^2(E)$. Furthermore, 
\begin{align*}
C_{\mu, \lambda}f(z)
&=C_{\mu, \lambda}(\Theta(z) h(z))\\
&=\mu \Gamma\Theta(\lambda \bar z) h(\lambda \bar z)\\
&=\mu \Gamma\Theta(\lambda \bar z)\Gamma \Gamma h(\lambda \bar z)\\
&=\Theta_{\lambda}(z)\mu \Gamma   h(\lambda \bar z)\\
&=\Theta_{\lambda} C_{\mu, \lambda}h(z)\in \Theta_{\lambda}H^2(E).
\end{align*}
Hence, we have $C_{\mu, \lambda}(\Theta H^2(E))\subset \Theta_{\lambda} H^2(E)$. Similarly, we have $C_{\mu, \lambda}(\Theta_{\lambda}H^2(E))\subset \Theta H^2(E)$. Hence, we have 
$$C_{\mu, \lambda} (\Theta H^2(E))=\Theta_{\lambda }H^2(E).$$ Since, $C_{\mu, \lambda}$ being a conjugation, is an isometry on $H^2(E)$, we have $$C_{\mu, \lambda} K_{\Theta}=K_{\Theta_{\lambda }}.$$
\end{proof}
\begin{corollary}
$C_{\mu, \lambda}K_{\Theta}=K_{\Theta}$ if $\Gamma \Theta(\lambda \bar z )\Gamma=\Theta(z)$.
\end{corollary}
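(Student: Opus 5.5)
This is a direct specialization of Lemma \ref{inv}. That lemma gives $C_{\mu,\lambda}K_{\Theta}=K_{\Theta_{\lambda}}$ with $\Theta_{\lambda}(z)=\Gamma\Theta(\lambda\bar z)\Gamma$. Under the hypothesis $\Gamma\Theta(\lambda\bar z)\Gamma=\Theta(z)$ a.e. on $\mathbb{T}$, we have $\Theta_{\lambda}=\Theta$ as elements of $H^{\infty}(\oL)$. Hence $\Theta_{\lambda}H^2(E)=\Theta H^2(E)$ and $K_{\Theta_{\lambda}}=K_{\Theta}$, so $C_{\mu,\lambda}K_{\Theta}=K_{\Theta}$.

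To make the argument self-contained, I would retrace the two steps behind Lemma \ref{inv} in this special case. First, for $f=\Theta h$ with $h\in H^2(E)$, insert $\Gamma^2=I_E$:
\begin{align*}
(C_{\mu,\lambda}f)(z)&=\mu\Gamma\Theta(\lambda\bar z)h(\lambda\bar z)=\Gamma\Theta(\lambda\bar z)\Gamma\,\mu\Gamma h(\lambda\bar z)\\
&=\Theta(z)(C_{\mu,\lambda}h)(z).
\end{align*}
Since $C_{\mu,\lambda}$ preserves $H^2(E)$ (it commutes with $P_+$), this gives $C_{\mu,\lambda}(\Theta H^2(E))\subset\Theta H^2(E)$. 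Applying $C_{\mu,\lambda}$ again and using $C_{\mu,\lambda}^2=I$ yields equality.

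Second, $C_{\mu,\lambda}$ is an antiunitary involution preserving $H^2(E)$. For $g\in K_{\Theta}$ and $\Theta h\in\Theta H^2(E)$,
$$\langle C_{\mu,\lambda}g,\Theta h\rangle=\langle C_{\mu,\lambda}\Theta h, g\rangle=0,$$
because $C_{\mu,\lambda}\Theta h\in\Theta H^2(E)$. Therefore $C_{\mu,\lambda}$ maps the orthogonal complement $K_{\Theta}$ into itself, and involutivity again upgrades the inclusion to equality.

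The only step needing care is reading the identity $\Theta_{\lambda}=\Theta$ correctly. The hypothesis holds a.e. on $\mathbb{T}$, while $\Theta$ is analytic, so $\Theta_{\lambda}$ must be understood via boundary values; we should also note that $\Theta_\lambda$ is again analytic. I would check this through Fourier coefficients: the coefficient of $z^n$ in $\Gamma\Theta(\lambda\bar z)\Gamma$ is $\bar\lambda^{n}\Gamma\Theta_n\Gamma$ by antilinearity of $\Gamma$. The identity then forces $\Theta_n=\bar\lambda^{n}\Gamma\Theta_n\Gamma$ for all $n$, so both sides define the same element of $H^{\infty}(\oL)$ and the multiplication computation above is legitimate.
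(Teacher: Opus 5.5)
Your proof is correct and follows the paper, which obtains the corollary directly from Lemma \ref{inv} by observing that the hypothesis gives $\Theta_{\lambda}=\Theta$. Your self-contained retracing adds two details the paper's proof of that lemma leaves implicit. The first is the antiunitary orthogonality step $\langle C_{\mu,\lambda}g,\Theta h\rangle=\langle C_{\mu,\lambda}\Theta h,g\rangle=0$ for passing from $\Theta H^2(E)$ to $K_{\Theta}$. The second is the Fourier-coefficient check that $\Theta_{\lambda}$ is analytic.
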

\begin{proof}
The proof follows immediately by Lemma \ref{inv}.
\end{proof}

\begin{lemma}
  Let $\Theta$ be a $\Gamma$-symmetric inner function. Then the operator $\widetilde{U}=C_{\mu, \lambda}C_{\Theta}: L^2(E)\to L^2(E)$ is a unitary operator. 
\end{lemma}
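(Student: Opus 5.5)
The plan is to show that $\widetilde{U}$ is linear and that it is a surjective isometry of $L^2(E)$; these two facts together make it unitary.

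\emph{Linearity.} Both $C_{\mu,\lambda}$ and $C_{\Theta}$ are antilinear, so their composition is linear. For $\alpha\in\mathbb{C}$ we have
$$C_{\mu,\lambda}C_{\Theta}(\alpha f)=C_{\mu,\lambda}(\bar\alpha C_{\Theta}f)=\alpha\, C_{\mu,\lambda}C_{\Theta}f,$$
and additivity is immediate.

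\emph{Isometry.} Since $\Theta$ is $\Gamma$-symmetric, $C_{\Theta}$ is a conjugation on $L^2(E)$, by the remark following \eqref{CTheta}. By the lemma following \eqref{Gam}, $C_{\mu,\lambda}$ is also a conjugation on $L^2(E)$. Each of them satisfies $\langle Cf,Cg\rangle=\langle g,f\rangle$. Applying this twice gives
$$\langle \widetilde{U}f,\widetilde{U}g\rangle=\langle C_{\Theta}g,C_{\Theta}f\rangle=\langle f,g\rangle .$$
Thus $\widetilde{U}$ preserves inner products; in particular it is an isometry and $\widetilde{U}^*\widetilde{U}=I$.

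\emph{Surjectivity.} Both maps are involutions, so $\widetilde{V}=C_{\Theta}C_{\mu,\lambda}$ is a two-sided inverse of $\widetilde{U}$:
$$\widetilde{U}\widetilde{V}=C_{\mu,\lambda}C_{\Theta}^2C_{\mu,\lambda}=I,\qquad \widetilde{V}\widetilde{U}=C_{\Theta}C_{\mu,\lambda}^2C_{\Theta}=I.$$
Combining this with the isometry property gives $\widetilde{U}^*=\widetilde{U}^{-1}=C_{\Theta}C_{\mu,\lambda}$, so $\widetilde{U}$ is unitary.

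For concreteness one can also record the explicit formula
$$(\widetilde{U}f)(z)=\mu\,\Gamma\Theta(\lambda\bar z)\,\lambda z\,f(\lambda\bar z)=\mu\lambda\, z\,\Gamma\Theta(\lambda\bar z)\Gamma\,\Gamma f(\lambda \bar z).$$
This is not needed for the proof.

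The only point that needs care is that $C_{\Theta}$ is genuinely an involution. That is exactly where the $\Gamma$-symmetry hypothesis enters, via the condition $\Theta(z)\Gamma\Theta(z)\Gamma=I_E$ a.e. on $\mathbb{T}$ stated after \eqref{CTheta}. Everything else is formal.
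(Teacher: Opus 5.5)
Your argument is correct and is essentially the paper's. The paper just invokes \cite[Lemma 1]{GPII} (a product of two conjugations is unitary), and you prove that fact directly, with the $\Gamma$-symmetry of $\Theta$ used only to make $C_{\Theta}$ a conjugation. One slip in your aside: antilinearity of $\Gamma$ gives $(\widetilde{U}f)(z)=\mu\lambda\bar z\,\Gamma\Theta(\lambda\bar z)\Gamma f(\lambda\bar z)$ rather than your formula, but as you note this plays no role in the proof.
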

\begin{proof}
The proof follows from \cite[Lemma 1]{GPII}.
\end{proof}

 If $P_{\Theta}$ is orthogonal projection on $K_{\Theta}$ and $P_{\Theta_{\lambda}}$ is orthogonal projection on $K_{\Theta_{\lambda}}$, then by the intertwining property of $\widetilde{U}=C_{\mu, \lambda}C_{\Theta}$, we have 
$\widetilde{U}P_{\Theta}=P_{\Theta_{\lambda}}\widetilde{U}.$ The intertwining property of $\widetilde{U}$ shows that $\widetilde{U}$ maps the model space $K_{\Theta}$ onto $K_{\Theta_{\lambda}}$. Let $U$ denote the restriction of $\widetilde{U}$ from $K_{\Theta}$ to $K_{\Theta_{\lambda}}$. Since 
$U$ is also unitary, it allows one to conjugate a matrix valued truncated Toeplitz operator on $K_{\Theta}$ into a matrix valued truncated Toeplitz operator on $K_{\Theta_{\lambda}}$. This intertwining property is used in the following theorem.

%For $f\in K_{\Theta}$ the action of $U$ is given by the following formula
%\begin{align*}
%Uf
%&=C_{\mu, \lambda}C_{\Theta}f\\
%&=C_{\mu, \lambda}(\Theta(z)\bar z \Gamma f(z))\\
%&=\mu \Gamma (\Theta(\lambda \bar z)\overline {\lambda \bar z} \Gamma f(\lambda \bar z))\\
%&=\mu \bar \lambda z \Gamma (\Theta(\lambda \bar z) \Gamma f(\lambda \bar z)).
%\end{align*}
%For $g\in K_{\Theta_{\lambda}}$ the action of $U^{*}$ is given by
%\begin{align*}
   % U^{*}g
   % &=C_{\Theta}C_{\mu, \lambda}g\\
   % &=\bar \mu \bar z \Theta(z)g(\lambda \bar z). 
%\end{align*}

\begin{theorem}
  Let $\Phi\in L^{\infty}(\oL)$ and $\Theta$ be an inner function such that $\Phi$ and $\Theta$ are $\Gamma$-symmetric and $\Phi\Theta=\Theta\Phi$. Then, for  $A_{\Phi}^{\Theta}\in\mathcal{MT}(\Theta) $ and  $\widetilde{U}|_{K_{\Theta}}=U$, we have 
  $$UA_{\Phi}^{\Theta}U^{*}=A_{\Psi}^{\Theta_{\lambda}},$$ 
  where $\Psi(z)=\Phi(\lambda \bar z)$. 
\end{theorem}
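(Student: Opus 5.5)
Write $\widetilde{U}=C_{\mu,\lambda}C_{\Theta}$ and split it into two steps. First conjugate the multiplication operator $M_\Phi$ on $L^2(E)$ by each conjugation separately. Then use the intertwining relation $\widetilde{U}P_{\Theta}=P_{\Theta_\lambda}\widetilde{U}$ to pass to the compressions.

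For $f\in K_\Theta\cap H^\infty(E)$ we have
\[
UA^\Theta_\Phi U^*(\widetilde U f)=\widetilde U P_\Theta M_\Phi f=P_{\Theta_\lambda}\widetilde U M_\Phi \widetilde U^{*}(\widetilde U f).
\]
So it suffices to compute $\widetilde U M_\Phi\widetilde U^*$ on $L^2(E)$ and show that it equals $M_\Psi$. Since $\widetilde U^*=C_\Theta C_{\mu,\lambda}$, we get
\[
\widetilde U M_\Phi\widetilde U^*=C_{\mu,\lambda}\big(C_\Theta M_\Phi C_\Theta\big)C_{\mu,\lambda}.
\]
By Proposition \ref{Mphi}, using the hypotheses that $\Phi,\Theta$ are $\Gamma$-symmetric and commute, the inner factor is $M_\Phi^*=M_{\Phi^*}$.

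It remains to compute $C_{\mu,\lambda}M_{\Phi^*}C_{\mu,\lambda}$. Pointwise,
\[
(C_{\mu,\lambda}M_{\Phi^*}C_{\mu,\lambda}g)(z)=\mu\Gamma\big(\Phi(\lambda\bar z)^*\,\bar\mu\,\Gamma g(z)\big)=\Gamma\Phi(\lambda\bar z)^*\Gamma\, g(z),
\]
so $C_{\mu,\lambda}M_{\Phi^*}C_{\mu,\lambda}=M_{\Gamma\Phi(\lambda\bar z)^*\Gamma}$. Here I will read "$\Phi$ is $\Gamma$-symmetric" as $\Gamma\Phi(z)\Gamma=\Phi(z)^*$ a.e., which is the sense used in the proof of Proposition \ref{Mphi}. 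This gives $\Gamma\Phi(w)^*\Gamma=\Phi(w)$ for a.e. $w$. Taking $w=\lambda\bar z$ shows the symbol is $\Phi(\lambda\bar z)=\Psi(z)$. Hence $\widetilde U M_\Phi\widetilde U^*=M_\Psi$.

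Combining, $UA^\Theta_\Phi U^*h=P_{\Theta_\lambda}(\Psi h)=A^{\Theta_\lambda}_\Psi h$ for $h$ in the dense set $\widetilde U(K_\Theta\cap H^\infty(E))$. Since $A^\Theta_\Phi$ is bounded, $UA^\Theta_\Phi U^*$ is bounded, and this shows $A^{\Theta_\lambda}_\Psi\in\mathcal{MT}(\Theta_\lambda)$ with the claimed identity. The dense set needs a short check: $\widetilde U$ maps $K_\Theta^\infty$ into $K^\infty_{\Theta_\lambda}$. This holds because $C_\Theta$ and $C_{\mu,\lambda}$ preserve boundedness, and by Lemma \ref{inv} together with the intertwining relation. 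Alternatively, it is enough to test against reproducing kernels $\ke_w^{\Theta_\lambda}x$, which are bounded.

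The main obstacle is bookkeeping with the antilinear maps. When $C_{\mu,\lambda}$ passes through a pointwise operator $A(z)$, the result is $\Gamma A(\lambda\bar z)\Gamma$; this is the computation already done in Theorem \ref{CSTO}, and I will reuse it. The second delicate point is making sure the intertwining relation $\widetilde U P_\Theta=P_{\Theta_\lambda}\widetilde U$ is legitimate. To confirm it, I would verify that $\widetilde U$ maps $K_\Theta$ onto $K_{\Theta_\lambda}$ and $K_\Theta^\perp$ onto $K_{\Theta_\lambda}^\perp$. This follows from $C_\Theta K_\Theta=K_\Theta$, $C_\Theta$ swapping $\Theta H^2(E)$ and $H^2(E)^\perp$, Lemma \ref{inv}, and the fact that $C_{\mu,\lambda}$ commutes with $P_+$ and so preserves $H^2(E)^\perp$.
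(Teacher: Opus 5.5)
Your proposal is correct and follows essentially the same route as the paper. You factor $\widetilde U M_\Phi \widetilde U^{*}=C_{\mu,\lambda}(C_\Theta M_\Phi C_\Theta)C_{\mu,\lambda}$, reduce the inner factor to $M_{\Phi^*}$ by Proposition~\ref{Mphi}, compute $C_{\mu,\lambda}M_{\Phi^*}C_{\mu,\lambda}=M_{\Gamma\Phi(\lambda\bar z)^*\Gamma}=M_\Psi$ using the $\Gamma$-symmetry of $\Phi$, and pass to compressions via $\widetilde U P_\Theta=P_{\Theta_\lambda}\widetilde U$, which you justify where the paper only asserts it.

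One small slip: in your pointwise formula the scalar inside $\Gamma$ should be $\mu$ (it comes from $C_{\mu,\lambda}g=\mu\Gamma g(\lambda\bar z)$), not $\bar\mu$; only then do the prefactors combine to $\mu\bar\mu=1$. Also, since $\Phi\in L^\infty(\mathcal{L}(E))$, both compressions are automatically bounded, so your density argument is harmless but unnecessary.
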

\begin{proof}
    Let $\Phi$ and $\Theta$ be $\Gamma$-symmetric  and suppose that $\Phi\Theta=\Theta\Phi$. Then, by Proposition \ref{Mphi}, we have 
    \begin{align*}
        UA_{\Phi}^{\Theta}U^{*}f
        &=UP_{\Theta}M_{\Phi}U^{*}f\\
        &=P_{\Theta_{\lambda}}UM_{\Phi} U^{*}f\\
        &=P_{\Theta_{\lambda}}C_{\mu, \lambda}C_{\Theta}M_{\Phi} C_{\Theta}C_{\mu, \lambda}f\\
        &=P_{\Theta_{\lambda}}C_{\mu, \lambda}M_{\Phi^{*}} C_{\mu, \lambda}f\\
         &=P_{\Theta_{\lambda}}C_{\mu, \lambda}M_{\Phi^{*}} (\mu \Gamma f(\lambda \bar z))\\
         &=P_{\Theta_{\lambda}}(\bar \mu C_{\mu, \lambda}(\Phi^{*}(z) \Gamma f(\lambda \bar z)))\\
         &=P_{\Theta_{\lambda}}(\bar \mu \mu\Gamma\Phi^{*}(\lambda \bar z) \Gamma f(\lambda \overline{\lambda \bar z})))\\
         &=P_{\Theta_{\lambda}}(\Gamma\Phi^{*}(\lambda \bar z) \Gamma f(z))\\
          &=P_{\Theta_{\lambda}}(\Phi(\lambda \bar z) f(z))\\
         &=A_{\Psi}^{\Theta_{\lambda}}f,
    \end{align*}
    where $\Psi(z)=\Phi(\lambda \bar z)$. 
\end{proof}

\vspace{0.5cm}

\noindent{\bf{Conflict of interest}}\\
The author declare no conflicts of interest.

\vspace{0.5cm}

\noindent{\bf{Data availability}}\\
Not applicable.

\vspace{0.5cm}

%\noindent{\bf{Acknowledgment}}\\
%I would like to thank the reviewers for their valuable comments and suggestions.

\end{document}